\theoremstyle{plain}
\newenvironment{subproof}[1][\proofname]{%
  \begin{proof}[#1]%
}{%
  \end{proof}%
}
\newcommand\ol\overline
\newcommand\dang\measuredangle
\newtheorem{theorem}{Theorem}[section]
\newtheorem{lemma}[theorem]{Lemma}
\newtheorem{fact}[theorem]{Fact}
\theoremstyle{definition}
\theoremstyle{remark}
\numberwithin{equation}{section}
\begin{document}

\title{Extreme Points on Circumconics Induced by Isogonal Conjugates in a Triangle}

\author{Daniel Hu}
\address{Los Altos High School, Los Altos, CA}
\email{daniel.b.hu@gmail.com}

\maketitle

\begin{abstract}
    We first introduce a configuration of arbitrary isogonal conjugates related to a known property concerning the spiral center of two pairs of isogonal conjugates. We then consider a special case where two conics are tangent at exactly two points. Finally, we apply the discoveries made in both configurations to state a general result concerning the extreme points (those lying on either the major or minor axis) of certain circumconics of a triangle.
\end{abstract}

\section{Introducing the Configuration}

\subsection{Conventions}

The reader should be familiar with the relationship between isogonal conjugation and circumconics of a triangle, as well as the characterization of a conic by cross ratio properties.

All angles used here are directed angles mod $180^\circ$. We use $(XYZ)$ to denote the circumcircle of three points $X, Y, Z$. We use $(VWXYZ)$ to denote the circumconic of $V, W, X, Y, Z$. The symbol $\infty_{XY}$ denotes the point of infinity along line $XY$. The expression $\mathcal C(AB; CD)$ for conic $\mathcal C$ denotes the cross ratio $(AB; CD)$ on $\mathcal C$.

This paper exclusively employs synthetic and projective techniques in order to provide a purely geometric perspective on the configuration.

\subsection{The Configuration}

The first configuration is as follows. In $ABC$ with circumcircle $\Omega$, let:
    \begin{itemize}
        \item $P$ and $P'$ be isogonal conjugates,
        \item $Q$ lie on line $PP'$,
        \item $\Omega$ meet $(ABCPP')$ at $D \ne A, B, C$,
        \item and $DP$ meet $\Omega$ at $X \ne D$.
    \end{itemize}

The following is the main result of our paper:

\begin{theorem}\label{original-statement}
    Given the aforementioned configuration, we have that:
    \begin{enumerate}
        \item 
        $(PQX), (ABCPQ)$ are tangent at $P$.
        \item 
        if $(PQX)$ and $(ABCPQ)$ meet at $L \ne P, Q$, then $PL$ and $(ABCPX)$ intersect on the radical axis of $\Omega$ and $(PQX)$.
    \end{enumerate}
\end{theorem}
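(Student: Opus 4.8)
The plan is to work entirely in the projective model in which isogonal conjugation sends lines to circumconics, so that every circumconic in play is the image of a line: writing $\mathrm{iso}(\cdot)$ for the conjugate, we have $\Omega=\mathrm{iso}(\ell_\infty)$, the conic $\mathcal E:=(ABCPP')=\mathrm{iso}(PP')$, $\mathcal F:=(ABCPQ)=\mathrm{iso}(P'Q^\ast)$, and $\mathcal G:=(ABCPX)=\mathrm{iso}(P'X^\ast)$, where $Q^\ast,X^\ast$ denote the conjugates of $Q,X$. In particular $D=\mathcal E\cap\Omega=\mathrm{iso}(\infty_{PP'})$; since $Q\in PP'$ its conjugate $Q^\ast$ lies on $\mathcal E$, and since $X\in\Omega$ we get $X^\ast\in\ell_\infty$. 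The engine of the whole argument is the spiral-similarity property referenced in the introduction: $(P,P')$ and $(Q,Q^\ast)$ are two pairs of isogonal conjugates, and the associated spiral center is exactly $D$. I would first record the two consequences I need as identities at $D$, namely a similarity $\triangle DPQ\sim\triangle DP'Q^\ast$ together with the incidence $X=DP\cap\Omega$, since these are what convert the isogonal hypotheses into usable statements about $X$ and $Q$.

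For part (a), I would show that the two conics through $P$ and $Q$ share a tangent at $P$. The tangent to the circle $\omega:=(PQX)$ at $P$ is pinned down by the tangent–chord relation $\dang(t,PQ)=\dang(XP,XQ)$. The tangent $s$ to the circumconic $\mathcal F=(ABCPQ)$ at $P$ I would extract from the cross-ratio (``degenerate chord'') characterization, reading the pencil from the vertex $C\in\mathcal F$: the tangent $s$ is the line through $P$ for which the pencils $(s,PA,PB,PQ)$ and $(CP,CA,CB,CQ)$ agree. The claim $s=t$ then reduces to a single directed-angle identity, and the point is that this identity is precisely the angle equality furnished by the spiral similarity at $D$ once $X$ is written as $DP\cap\Omega$. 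The bookkeeping in transferring the cross-ratio on $\mathcal F$ into an angle comparable with the tangent–chord angle on $\omega$ is the first real obstacle; I expect to route it through the three points $A,B,C$ shared by $\mathcal F$ and $\Omega$.

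For part (b), note first that $\Omega$ and $\omega$ are genuine circles meeting at $X$, so their radical axis passes through $X$. Writing $L$ for the fourth common point of $\omega$ and $\mathcal F$ (they meet at $P$ doubly by part (a), then at $Q$ and at $L$) and $M:=PL\cap\mathcal G$ for the second intersection of $PL$ with $\mathcal G=(ABCPX)$, the assertion becomes $\mathrm{pow}_\Omega(M)=\mathrm{pow}_\omega(M)$. One side is immediate: since $P,L\in\omega$ and $M\in PL$, we have $\mathrm{pow}_\omega(M)=\ol{MP}\cdot\ol{ML}$. The crux is to evaluate $\mathrm{pow}_\Omega(M)$, and here I would exploit that $\mathcal G$ and $\Omega$ share the \emph{four} points $A,B,C,X$: cutting along a line through $M$ and transferring cross-ratios between the pencil determined by $(\mathcal G,\Omega)$ lets me express $\mathrm{pow}_\Omega(M)$ as a product which, after feeding back the tangency of part (a) (this fixes the second base point governing the pencil through $P,Q,L$), collapses to the same $\ol{MP}\cdot\ol{ML}$. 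Equating the two powers places $M$ on the radical axis.

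The step I expect to fight hardest with is precisely this identification of $\mathrm{pow}_\Omega(M)$: because $M$ lies on a conic rather than on a line through the relevant circle points, converting its conic incidence into an equal-power (radical-axis) statement is not formal, and it is exactly where part (a) must re-enter the argument. I would treat the tangency from (a) as the lever supplying the missing relation, and organize the reduction so that the cross-ratio identities among the four conics $\Omega,\mathcal E,\mathcal F,\mathcal G$ — all passing through $A,B,C$ — carry the computation, keeping coordinates out of it in keeping with the paper's synthetic–projective aims.
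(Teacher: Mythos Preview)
Your plan for part (a) has the right shape --- show that the tangents to $(PQX)$ and to $\mathcal F=(ABCPQ)$ at $P$ coincide --- and this is exactly what the paper does. But your identification of the spiral center is off. The ``known property'' is \emph{not} that $D$ is the center of a spiral $\triangle DPQ\sim\triangle DP'Q^\ast$; rather, the spiral center of $PQ\to Q'P'$ (so $P\mapsto Q'$, $Q\mapsto P'$, with $Q'$ the isogonal conjugate of $Q$) is a point $M\in\Omega$ generally distinct from $D$. A quick sanity check kills your version: taking $Q=P'$ forces $Q^\ast=P$, and then $\triangle DPQ\sim\triangle DP'Q^\ast$ would demand $DP=DP'$, which is false in general. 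In the paper, $M$ is the \emph{second} intersection of $(PQX)$ with $\Omega$ (so the radical axis of $\Omega$ and $(PQX)$ is the chord $MX$), and the spiral at $M$ shows directly that the common tangent at $P$ is the line $PQ'$. The cross-ratio check that $PQ'$ is also tangent to $\mathcal F$ is then a two-line computation through the conic $(ABCPP'Q')$. So your part (a) outline is salvageable, but only after replacing $D$ by the correct spiral center $M$ and the correct spiral $PQ\to Q'P'$.

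For part (b), your power-of-a-point plan has a real gap, and you have correctly flagged it yourself: there is no mechanism in your outline that converts ``$M$ lies on the circumconic $\mathcal G=(ABCPX)$'' into a value for $\mathrm{pow}_\Omega(M)$. Cross-ratio transfer between $\mathcal G$ and $\Omega$ along a line through $M$ does not produce a product of signed lengths, so the hoped-for collapse to $\ol{MP}\cdot\ol{ML}$ never gets off the ground. The paper's route is substantially different and heavier: it introduces the auxiliary circle $(PP'Q')$, its second intersections $N$ with $\Omega$ and $E$ with $MD$, and then runs Desargues' Involution Theorem twice on line $PP'$ (with quadrilaterals $NDFX$ and $NMKF$) to manufacture a concurrence on $\Omega$. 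A long cross-ratio chase through the conics $(ABCPP')$, $\Omega$, $(PP'Q')$, and $(PP'DQ'E)$ then pins $E$ onto $(ABCPP')$; Reim and the elementary Fact~2.1 finish by showing that $PL$ meets $MX$ --- the radical axis --- on $(ABCPX)$. None of this infrastructure (the point $M$, the involution argument, the auxiliary circle through $P'$ and $Q'$) appears in your sketch, and I do not see a way to bypass it with the bare power comparison you propose.
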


\begin{figure}[!htbp]\centering
    \includegraphics[width=200pt]{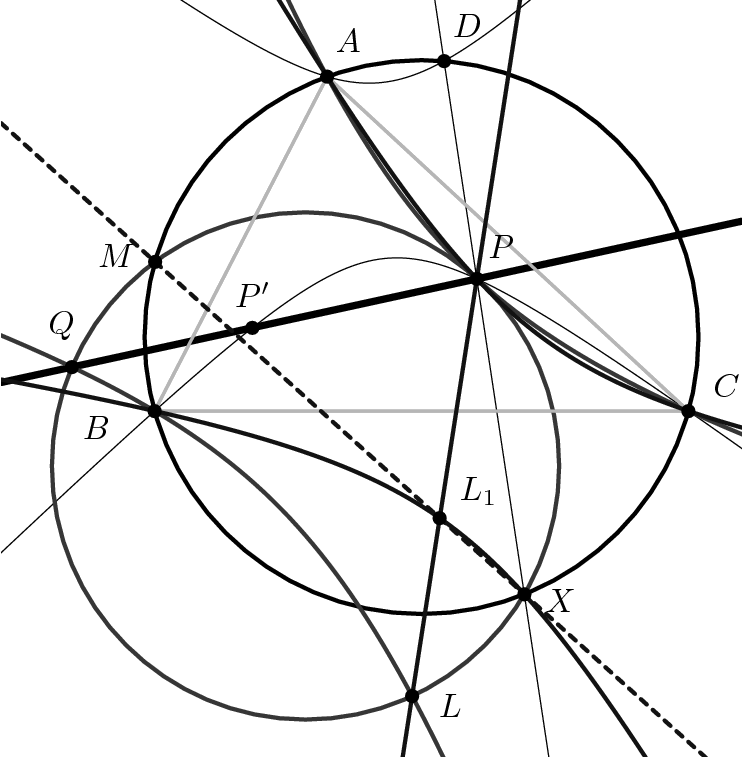}
    \caption{Main Configuration 1}
    \label{fig:1}
\end{figure}

From this main result, we are able to deduce the following two results as well:

\begin{theorem}\label{second-main}
    For $ABC$ with circumcircle $\Omega$ and $P$ with isogonal conjugate $P'$, let $D = \Omega \cap (ABCPP')$. Let $X = DP \cap \Omega, Y = DP' \cap \Omega$. Let $(PXY)$ meet $PP'$ at $Z$.
    
    (a) $(PXY)$ and $(ABCPZ)$ are tangent at $P$ and $Z$.
    
    (b) Let $(PXY)$ meet $(ABCPX)$ at $Z_1, Z_2$. Let $\Omega$ meet $XZ_1, XZ_2$ at $Z_1', Z_2'$. Then $Z_1'Z_2'$ is tangent to $(ABCPP')$ at $P'$.
\end{theorem}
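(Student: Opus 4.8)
The plan is to obtain both parts by specializing the free point $Q$ of \Cref{original-statement} to $Z$ and then exploiting the symmetry $P\leftrightarrow P'$, $X\leftrightarrow Y$; note that this swap fixes $D$, since $(ABCPP')=(ABCP'P)$, and hence fixes $\Omega\cap(ABCPP')$. For part (a) I first observe that $Z$, being the second intersection of the circle $(PXY)$ with line $PP'$, is a legitimate choice of $Q$, and moreover $(PZX)=(PXY)$ because $Z$ already lies on $(PXY)$. Thus \Cref{original-statement}(a) with $Q=Z$ gives at once that $(PXY)$ and $(ABCPZ)$ are tangent at $P$. The point of the construction is that tangency at $P$ holds for the circle through $P,X$ and \emph{any} second point $Z_0\in PP'$ (i.e.\ for every $Q$); the extra requirement that this circle also pass through $Y=DP'\cap\Omega$ — which is exactly what pins down $Z_0=Z$ — is what should upgrade the single tangency to a double one.

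So the real content of (a) is the tangency at $Z$, and I expect this to be the main obstacle. I would establish it by matching tangent lines directly. On the circle side, the tangent $\tau$ to $(PXY)$ at $Z$ is given by the tangent--chord relation $\dang(\tau,ZX)=\dang(YZ,YX)$. On the conic side, the tangent $\tau'$ to $(ABCPZ)$ at $Z$ is characterized by $(\tau',ZA;ZB,ZC)=(ABCPZ)(ZA;BC)$, and the right-hand cross ratio, evaluated from $P$, equals $(PZ,PA;PB,PC)$ with $PZ$ the line $PP'$. Showing $\tau=\tau'$ is then the symmetric twin of the computation that proves \Cref{original-statement} at $P$, now based at $Z$ and $P'$ and fed by the incidences $X=DP\cap\Omega$, $Y=DP'\cap\Omega$ together with the isogonal relation between $P$ and $P'$. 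Conceptually, since $(PXY)$ and $(ABCPZ)$ both pass through $P,Z$ and these both lie on $PP'$, restricting either conic to $PP'$ gives a binary quadratic with roots $P,Z$, so $PP'$ is a component of a member of the pencil they generate; a short analysis of that degenerate member shows the only alternatives are $L=Z$ (bitangency) or an osculation $L=P$ at $P$. Hence it suffices to prove the contact at $P$ is ordinary, after which the residual fourth intersection is forced onto $PP'$ and equals $Z$ — this is precisely the statement that the two conics are tangent at exactly the two points $P$ and $Z$.

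For part (b) the tool is central projection $\pi_X$ from $X$ onto $\Omega$, sending a point $W$ to the second intersection of $XW$ with $\Omega$. The crucial incidences are $\pi_X(P)=D$ (because $X,P,D$ are collinear, as $X=DP\cap\Omega$), $\pi_X(Y)=Y$, $\pi_X(A)=A$, $\pi_X(B)=B$, $\pi_X(C)=C$, and $\pi_X(Z_1)=Z_1'$, $\pi_X(Z_2)=Z_2'$ by definition. Since $X$ lies on both the circle $(PXY)$ and the conic $(ABCPX)$, and these meet exactly at $P,X,Z_1,Z_2$, the map $\pi_X$ transfers the cross ratios on $(PXY)$ and on $(ABCPX)$ to cross ratios among $A,B,C,D,Y,Z_1',Z_2'$ on $\Omega$, reducing the whole claim to a configuration living purely on $\Omega$. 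To reach the off-circle point $P'$, I would use the perspectivity from $D$, which carries $(ABCPP')$ to $\Omega$ fixing $A,B,C$ and sending $P\mapsto X$, $P'\mapsto Y$; under it the tangent to $(ABCPP')$ at $P'$ corresponds to the tangent to $\Omega$ at $Y$, so ``$Z_1'Z_2'$ is tangent to $(ABCPP')$ at $P'$'' becomes a tangency/incidence condition on $\Omega$ relating $Z_1',Z_2'$ and $Y$. The main obstacle in (b) is this bridge back to $P'$: translating the tangency at an off-circle point into clean $\Omega$-data. Here I would lean on part (a) and on \Cref{original-statement}(b), observing that the radical axis of $\Omega$ and $(PXY)$ is exactly the chord $XY$, since $\Omega\cap(PXY)=\{X,Y\}$, which should let the radical-axis conclusion of \Cref{original-statement}(b) be read off directly along $XY$.
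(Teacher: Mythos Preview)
Your reduction of the tangency at $P$ to \Cref{original-statement}(a) with $Q=Z$ is correct and is exactly what the paper does. The difficulty, as you correctly identify, is the tangency at $Z$, and here both of your proposed approaches have gaps.

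The pencil argument does not go through. You are right that $PP'\cup\ell$ is a degenerate member of the pencil generated by $(PXY)$ and $(ABCPZ)$, and one checks that $\ell$ must pass through $P$ and the residual fourth intersection $L$. But nothing in this forces $L\in PP'$: generically the degenerate members are $t\cup ZL$ (with $t$ the common tangent at $P$) and $PZ\cup PL$, and both are perfectly consistent with an $L$ off the line $PP'$. Proving that the contact at $P$ is merely ordinary only rules out $L=P$, not a generic $L$, so your conclusion that ``the residual fourth intersection is forced onto $PP'$ and equals $Z$'' is unsupported.

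The ``symmetric twin'' computation is likewise not justified. The proof of \Cref{original-statement}(a) at $P$ uses that $P$ has its isogonal conjugate $P'$ on the line $PQ$, and identifies the tangent there as $PQ'$. There is no analogous structure at $Z$: the isogonal conjugate $Z'$ of $Z$ lies on the conic $(ABCPP')$, not on the line $PP'$, so the roles of $P$ and $Z$ are not interchangeable in the way you need. The $P\leftrightarrow P'$ swap does not help either, since $(PXY)$ is not invariant under it.

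The paper's route is different and hinges on \Cref{original-statement}(b), which you never invoke for part (a). Assuming a third intersection $Z^*\ne P,Z$ exists, part (b) of \Cref{original-statement} (with $Q=Z$, so that the radical axis of $\Omega$ and $(PXY)$ is the chord $XY$) says $PZ^*$ meets $(ABCPX)$ on $XY$. A short cross-ratio chase shows independently that $I:=PZ\cap(ABCPX)$ also lies on $XY$. Hence $PZ^*$ and $PZ$ both pass through $I$, forcing $Z^*=Z$. That use of (b) is the missing idea.

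For part (b), your projection $\pi_X$ onto $\Omega$ is a reasonable opening, and your observation that the radical axis is $XY$ is correct. But the step ``under the perspectivity from $D$, the tangent to $(ABCPP')$ at $P'$ corresponds to the tangent to $\Omega$ at $Y$'' is not a statement about lines in the plane: projection from $D$ is a map between the two conics, not a planar projectivity, so it does not carry the line $Z_1'Z_2'$ to anything directly comparable with the tangent to $\Omega$ at $Y$. The paper instead proves, via two auxiliary lemmas, that every chord $X_1X_2$ of $(ABCPX)$ with $PXX_1X_2$ concyclic is parallel to $PP'$, and that the corresponding $X_1'X_2'$ always passes through $P'$; the tangency direction at $P'$ is then obtained by a separate angle chase. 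Your outline does not reach either of these intermediate facts.
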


\begin{theorem}
    Let $ABC$ have isogonal conjugates $P, P'$ with $D = (ABC) \cap (ABCPP')$. Let $D' \in (ABC)$ such that $DD' \parallel PP'$. Then the following are equivalent:
    \begin{itemize}
        \item either $PP'$ is tangent to $(ABCD'P)$ or $PP'$ passes through the center of $(ABCD'P)$
        \item $P$ lies on either the major or minor axis of $(ABCD'P)$.
    \end{itemize}
\end{theorem}

\section{Preliminary Lemmas}

To prove this property, we first start with a very simple fact.

\begin{fact}\label{simple-fact}
    Fix conic $\mathcal H$ containing points $A, B, C$ and line $\ell$ containing $D$. Vary point $E$ on $\mathcal H$; let the circumconic $\mathcal E$ of $ABCDE$ meet $\ell$ at $F \ne D$. Then $EF$ passes through a fixed point $G$ on $\mathcal H$.
\end{fact}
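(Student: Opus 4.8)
The plan is to recast the claim as a statement about a single projective map. As $E$ ranges over $\mathcal{H}$, the assignment $\phi\colon E \mapsto F$ sends $\mathcal{H}$ to $\ell$, where $F$ is the second intersection of $\mathcal{E}=(ABCDE)$ with $\ell$; note that $F$, lying in $\mathcal{E}\cap\ell$, makes $A,B,C,D,E,F$ six points of $\mathcal{E}$. The point $G$ we seek should be a fixed point of $\mathcal{H}$ for which $\phi$ is exactly the perspectivity $\tau_G\colon E\mapsto EG\cap\ell$, i.e.\ the projection of $\mathcal{H}$ onto $\ell$ from the center $G$. Thus the whole statement reduces to showing (i) that $\phi$ is projective, and (ii) that $\phi$ coincides with a perspectivity from some $G\in\mathcal{H}$.

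For (i) I would use the pencil of conics through the four fixed points $A,B,C,D$. Writing a general member as $\mathcal{C}_0+\lambda\,\mathcal{C}_\infty$ for two fixed conics $\mathcal{C}_0,\mathcal{C}_\infty$ of the pencil, the rule $E\mapsto\lambda(E)=-\mathcal{C}_0(E)/\mathcal{C}_\infty(E)$ selects the unique member through $E$. Restricting to a rational degree-two parametrization $t\mapsto E(t)$ of $\mathcal{H}$, both $\mathcal{C}_0(E(t))$ and $\mathcal{C}_\infty(E(t))$ are quartics in $t$ sharing the factor $(t-t_A)(t-t_B)(t-t_C)$, since $A,B,C$ lie on $\mathcal{H}$ and on every conic of the pencil; after cancellation $\lambda(t)$ is a ratio of linear forms, so $E\mapsto\lambda$ is a projectivity $\mathcal{H}\to\mathbb{P}^1$. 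In the same way, restricting the pencil to $\ell$ and factoring out the common root at $D$ shows $\lambda\mapsto F$ is a projectivity $\mathbb{P}^1\to\ell$. Hence $\phi$ is projective. This degree-cancellation step is the technical heart of the argument, and the place where I expect to spend the most care.

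For (ii) I would first locate the fixed points of $\phi$. Let $R,S$ denote the (possibly complex) intersections $\mathcal{H}\cap\ell$. Taking $E=R$ makes $\mathcal{E}=(ABCDR)$ pass through both $D$ and $R$ on $\ell$, so $\mathcal{E}\cap\ell=\{D,R\}$ and $F=R$; thus $\phi(R)=R$, and likewise $\phi(S)=S$. Now fix any $E_0\notin\{R,S\}$, set $F_0=\phi(E_0)$, and let $G$ be the second meeting of line $E_0F_0$ with $\mathcal{H}$. The perspectivity $\tau_G$ is projective, fixes $R$ and $S$ (each lies on $\ell$, so $GR\cap\ell=R$), and sends $E_0$ to $F_0$; therefore $\phi$ and $\tau_G$ agree at the three points $R,S,E_0$, hence are equal. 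Consequently $F=\phi(E)=EG\cap\ell$ for every $E$, which is exactly the assertion that $E$, $F$, and the fixed point $G\in\mathcal{H}$ are collinear. The one loose end is the degenerate case in which $\ell$ is tangent to $\mathcal{H}$ (so $R=S$): there I would either replace the repeated fixed point by a first-order tangency condition or deduce the result by continuity from the generic case, since $G$ depends continuously on the data.
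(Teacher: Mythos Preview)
Your argument is correct, but it takes a substantially different and heavier route than the paper's. The paper dispatches the fact in two lines: for each $E$, let $G$ be the second intersection of $EF$ with $\mathcal H$, and compute
\[
\mathcal H(BC;AG)\ \stackrel{E}{=}\ \mathcal E(BC;AF)\ \stackrel{D}{=}\ (DB,DC;DA,\ell),
\]
where the first equality holds because $E$ lies on both conics and $EG=EF$, and the second because $D\in\mathcal E$ with $DF=\ell$. The right-hand side is independent of $E$, so $G$ is fixed on $\mathcal H$. Your approach instead first establishes that $E\mapsto F$ is projective (via the pencil through $A,B,C,D$ and a degree-counting cancellation), then identifies it as a perspectivity by matching fixed points at $\mathcal H\cap\ell$ and one further value. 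This is a legitimate and more structural viewpoint---it explains \emph{why} such a $G$ exists rather than simply computing its cross ratio---but it costs you the parametrization argument, the handling of possibly complex $R,S$, and the tangent degeneration $R=S$, all of which the paper's direct cross-ratio computation avoids entirely. In the spirit of this paper, which works throughout with cross-ratio identities on conics, the two-line version is the natural proof.
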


\begin{proof}
    Let $EF$ meet $\mathcal H$ at $G \ne E$. Then
    \[\mathcal H(BC; AG) \stackrel{E}{=} \mathcal E(BC; AF) \stackrel{D}{=} (DB, DC; DA, \ell).\]
    Since $(DB, DC; DA, \ell)$ is fixed, by definition of a conic (\cite{ref:Cross}), $G$ is also fixed, as desired.
\end{proof}

We now provide two related lemmas. Labeling is distinct from the original configuration.

\begin{lemma}\label{two-circumconics}
    For $A, B, C, P, P', Q, Q'$ such that $(P, P'), (Q, Q')$ are pairs of isogonal conjugates, let $R, R'$ respectively lie on $PQ, P'Q'$. Then $(ABCPR)$ intersects $(ABCQ'R')$ on $RR'$.
\end{lemma}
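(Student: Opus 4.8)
The plan is to treat isogonal conjugation as a duality between lines and circumconics. Write $\iota$ for isogonal conjugation in $ABC$; it sends each circumconic of $ABC$ to a line and each line to a circumconic, and it is an involution. Here $\Gamma_1 := (ABCP'Q') = \iota(PQ)$ and $\Gamma_2 := (ABCPQ) = \iota(P'Q')$, and since $R\in PQ$ and $R'\in P'Q'$ we get $\iota(R)\in\Gamma_1$ and $\iota(R')\in\Gamma_2$. Both target conics $(ABCPR)$ and $(ABCQ'R')$ pass through $A,B,C$, so they meet again at a single fourth point $S$, and the assertion is exactly that $S\in RR'$.

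The first step is to reduce the whole lemma to one cross-ratio identity by way of \cref{simple-fact}. I would fix $R$ and let $R'$ vary along $P'Q'$, applying \cref{simple-fact} with the fixed conic $\mathcal H=(ABCPR)$, the fixed line $\ell=P'Q'$, the fixed point $D=Q'$, and the variable point $S$. Since $(ABCQ'S)=(ABCQ'R')$ meets $P'Q'$ precisely at $Q'$ and $R'$, the fact tells me that $SR'$ passes through a point $G$ of $(ABCPR)$ that stays fixed as $R'$ varies; hence $S\in RR'$ for every $R'$ as soon as $G=R$. Now $G$ and $R$ are pinned on $(ABCPR)$ by the cross-ratios $(ABCPR)(BC;AG)$ and $(ABCPR)(BC;AR)$: the former equals $(Q'B,Q'C;Q'A,Q'P')$ by \cref{simple-fact}, while the latter, read from the vertex $P$ and using $PR=PQ$, equals $(PB,PC;PA,PQ)$. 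Thus the lemma collapses to
\[(PB,PC;PA,PQ)=(Q'B,Q'C;Q'A,Q'P').\]

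To prove this identity I would read each side intrinsically: the left side is the cross-ratio $\Gamma_2(BC;AQ)$ taken from $P\in\Gamma_2$, and the right side is $\Gamma_1(BC;AP')$ taken from $Q'\in\Gamma_1$. Evaluating both instead from the second common point $N$ of $\Gamma_1$ and $\Gamma_2$ makes the first three rays $NB,NC,NA$ coincide, so the identity reduces to the pure incidence that $N$, $P'$, $Q$ are collinear. This last point is the essential one and the main obstacle, since it is where the isogonal hypotheses are actually spent. Applying $\iota$ identifies $N=\iota(PQ\cap P'Q')$ and turns the incidence into the equivalent statement that $\iota$ interchanges the two diagonal points $PQ\cap P'Q'$ and $PQ'\cap P'Q$ of the complete quadrangle on $P,P',Q,Q'$, equivalently that $W:=PQ\cap P'Q'$ lies on the circumconic $(ABCPQ')=\iota(P'Q)$. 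I expect to settle this either by exhibiting the projectivity $PQ\to P'Q'$ that matches the three side-traces $\ell\cap BC,\ \ell\cap CA,\ \ell\cap AB$ and checking it carries $P$ to $Q'$, or by a short direct cross-ratio computation, substituting $P'=\iota(P)$, $Q'=\iota(Q)$ and reducing the two sides to manifestly equal rational expressions; everything else in the argument is projective bookkeeping.
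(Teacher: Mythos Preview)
Your reduction is sound and genuinely different from the paper's argument. The paper proves the lemma by a chain of synthetic incidences: Pascal on $ASRPBC$, then the dual of Desargues' involution theorem to force an auxiliary collinearity, then Desargues' theorem on two triangles, and finally converse Pascal on $Q'R'SACB$ to place $S$ on the second conic. It never isolates a single numerical identity; all the work is done by classical incidence theorems.

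Your route instead collapses everything, via \Cref{simple-fact}, to the single cross-ratio equality
\[
(PB,PC;PA,PQ)=(Q'B,Q'C;Q'A,Q'P'),
\]
and then reinterprets both sides as conic cross-ratios $\Gamma_2(BC;AQ)$ and $\Gamma_1(BC;AP')$, reducing further to the collinearity of $P'$, $Q$, and the fourth common point $N$ of $\Gamma_1=(ABCP'Q')$ and $\Gamma_2=(ABCPQ)$. That is a clean and conceptually transparent decomposition; it also makes explicit that the isogonal hypothesis is spent entirely on that one incidence, which the paper's proof somewhat obscures.

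The only soft spot is that you leave this last incidence as a promise. Your option~(a), ``exhibit the projectivity $PQ\to P'Q'$ matching the side-traces and check it sends $P$ to $Q'$,'' is really just a restatement of the cross-ratio equality unless you name the projectivity concretely, so it is option~(b) that actually closes the argument. Fortunately that computation is short: in barycentrics with $P=(p_1{:}p_2{:}p_3)$ and $Q=(q_1{:}q_2{:}q_3)$ one finds both sides equal to
\[
\frac{p_3\,(p_1q_2-p_2q_1)}{p_2\,(p_1q_3-p_3q_1)},
\]
so the identity, and hence the collinearity $N\in P'Q$ (equivalently $\iota(W)\in P'Q$, i.e.\ $\iota$ swaps the two diagonal points of the quadrangle $P,P',Q,Q'$), does hold. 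With that line written out, your proof is complete and arguably more economical than the paper's DDIT-based argument, at the cost of abandoning a purely synthetic presentation.
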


\begin{figure}[!htbp]\centering
    \includegraphics[width=200pt]{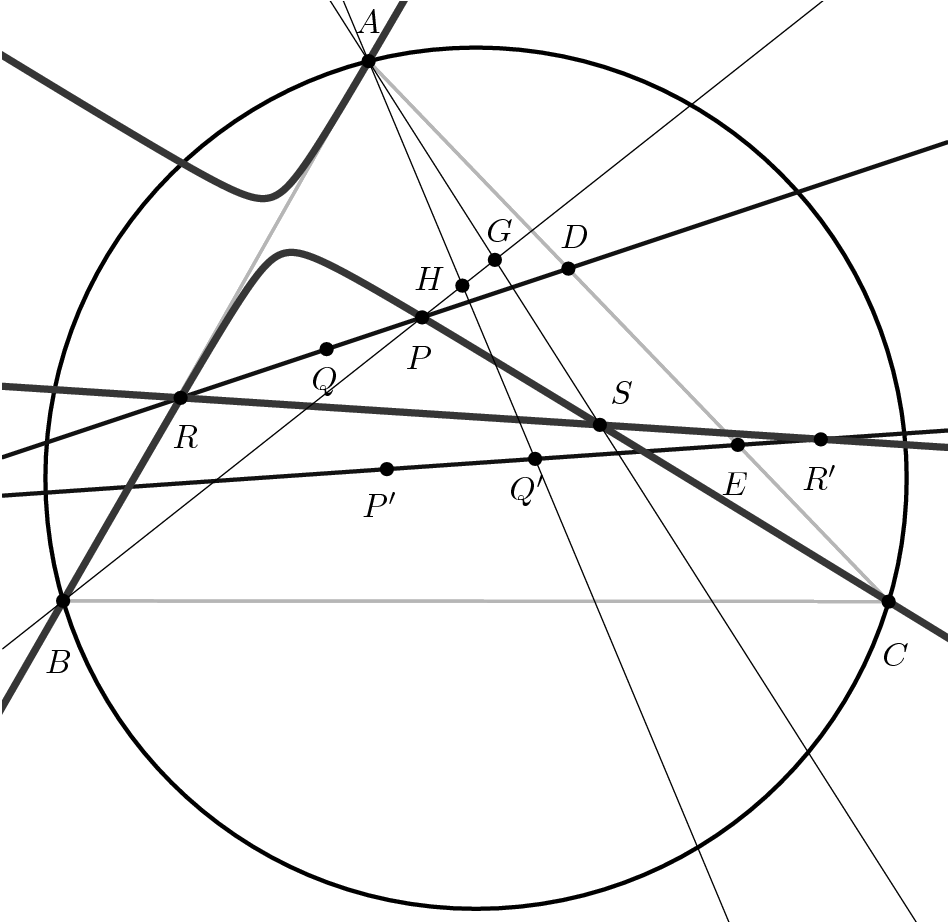}
    \caption{Lemma 2.2}
\end{figure}

\begin{subproof}
    Let $AC$ meet $PQ$ at $D$ and $P'Q'$ at $E$. Let $RR'$ meet $BC$ at $F$ and $(ABCPR)$ at $S \ne R$. Let $AS$ meet $BP$ at $G$. By Pascal's Theorem (\cite{ref:Pascal}) on $ASRPBC$, $D, F, G$ are collinear.
    
    Let $AQ'$ meet $BP$ at $H$; $P'Q'$ meet $BC$ at $I$; and $AP'$ meet $BQ$ at $J$. Let $HJ$ meet $PQ$ at $D_0$ and $P'Q'$ at $I_0$. By the Dual of Desargues' Involution Theorem (DDIT), $(AB, AD_0), (AP, AJ), (AQ, AH)$ are pairs of an involution (\cite{ref:Elementary}, 133), and $(BA, BI_0), (BP', BH), (BJ, BQ')$ are pairs of an involution, so $D_0$ lies on $AC$ and $I_0$ lies on $BC$. Thus $D \equiv D_0$ and $I \equiv I_0$, so $D, H, I$ are collinear.
    
    By Desargues' Theorem (\cite{ref:Desargues}) on triangles $AQ'E$ and $GBF$, $AG, BQ', EF$ concur. By the converse of Pascal on $Q'R'SACB$, $S$ lies on $(ABCQ'R')$ as desired.
\end{subproof}

\begin{lemma}\label{isogonal-miquel}
    Let $P, P'$ and $Q, Q'$ be two pairs of isogonal conjugates in $ABC$. Then the spiral center from $PQ$ to $Q'P'$ lies on $(ABC)$.
\end{lemma}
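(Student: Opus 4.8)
The plan is to locate the spiral center combinatorially, translate the claim into a single modulus computation, and then isolate the one genuinely isogonal input. Write $M$ for the center of the spiral similarity carrying $P \mapsto Q'$ and $Q \mapsto P'$. By the spiral-similarity lemma, $M$ is the second intersection of the circles $(XPQ')$ and $(XQP')$, where $X = PQ \cap P'Q'$; equivalently, $M$ is the Miquel point of the complete quadrilateral cut out by the four lines $PQ$, $P'Q'$, $PQ'$, $QP'$. This determines $M$ uniquely, so the whole lemma reduces to verifying that this one point lies on $\Omega$.

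To run the verification I would coordinatize with $\Omega$ as the unit circle, so that $A, B, C$ correspond to unimodular $a, b, c$ and $P, P', Q, Q'$ to $p, p', q, q'$. The defining relation of the spiral center, $\frac{Q'-M}{P-M} = \frac{P'-M}{Q-M}$, solves to
\[ M = \frac{pp' - qq'}{(p+p') - (q+q')}, \]
a formula pleasingly symmetric in the two pairs, as it must be. Thus $M \in \Omega$ is exactly the assertion $|pp' - qq'| = |(p+p') - (q+q')|$. The engine that makes this true is a single identity for isogonal conjugates on the unit circumcircle: writing $\sigma_2 = ab+bc+ca$ and $\sigma_3 = abc$, one has $pp' = \sigma_2 - \sigma_3\,\overline{(p+p')}$ for every isogonal pair $(P,P')$. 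Granting this, the constants $\sigma_2$ cancel and $M = -\sigma_3\,\overline{(s-t)}/(s-t)$ with $s = p+p'$ and $t = q+q'$; since $|\sigma_3| = |abc| = 1$ this has modulus $1$, and so $M \in \Omega$.

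The crux — and the one step that is not merely formal — is establishing the identity $pp' = \sigma_2 - \sigma_3\,\overline{(p+p')}$. I would extract it from the vertex characterization of isogonality, namely that $\frac{(p-a)(p'-a)}{(b-a)(c-a)}$ is real (together with its cyclic images), which on the unit circle rewrites as $(p-a)(p'-a) = bc\,(a\bar p - 1)(a\bar{p'} - 1)$; this is linear in $p'$, so solving the system from two vertices gives $p'$ explicitly, and then forming $p+p'$ and $pp'$ produces the stated relation. This is precisely where the quadratic, non-projective nature of isogonal conjugation surfaces, and it is why the clean cross-ratio transfer of \cref{simple-fact} cannot be invoked here. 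A purely synthetic alternative would instead replace the identity by a directed-angle chase proving $\angle(MB,MC) = \angle(AB,AC)$, but the computation above is the shorter and more transparent route, and is the one I would actually take.
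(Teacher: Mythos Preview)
Your proof is correct and takes a genuinely different route from the paper's. The paper proceeds entirely synthetically: it introduces auxiliary points $R,R',S,S'$ on $\Omega$ so that \Cref{two-circumconics} (after isogonal conjugation) forces certain intersections $E,F$ onto $\Omega$, and then closes with a directed-angle chase, splitting into cases according to whether three of $P,P',Q,Q'$ are collinear. Your approach instead coordinatizes by the unit circumcircle, writes down the closed form $M=\dfrac{pp'-qq'}{(p+p')-(q+q')}$ for the spiral center, and reduces $|M|=1$ to the single identity $pp'=\sigma_2-\sigma_3\,\overline{p+p'}$ for isogonal pairs. That identity is indeed valid: subtracting the vertex relations $(p-a)(p'-a)=bc(a\bar p-1)(a\bar{p'}-1)$ at two vertices gives $\sigma_3\,\overline{pp'}=\sigma_1-(p+p')$, and conjugating yields exactly your formula.

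What each approach buys: yours is short, uniform, and automatically handles the collinear degeneration (the only breakdown is $p+p'=q+q'$, which forces $\{P,P'\}=\{Q,Q'\}$ and is vacuous); it also makes transparent why the result is symmetric in the two pairs. The paper's argument, by contrast, stays within the purely synthetic/projective framework it commits to in \S1.1, and in doing so develops intermediate facts (the concurrence in \Cref{two-circumconics}, the points $E,F\in\Omega$) that feed directly into the proof of \Cref{original-part-2}. So your proof is cleaner for the lemma in isolation, while the paper's proof does double duty as scaffolding for what follows.
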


\begin{figure}[!htbp]\centering
    \includegraphics[width=200pt]{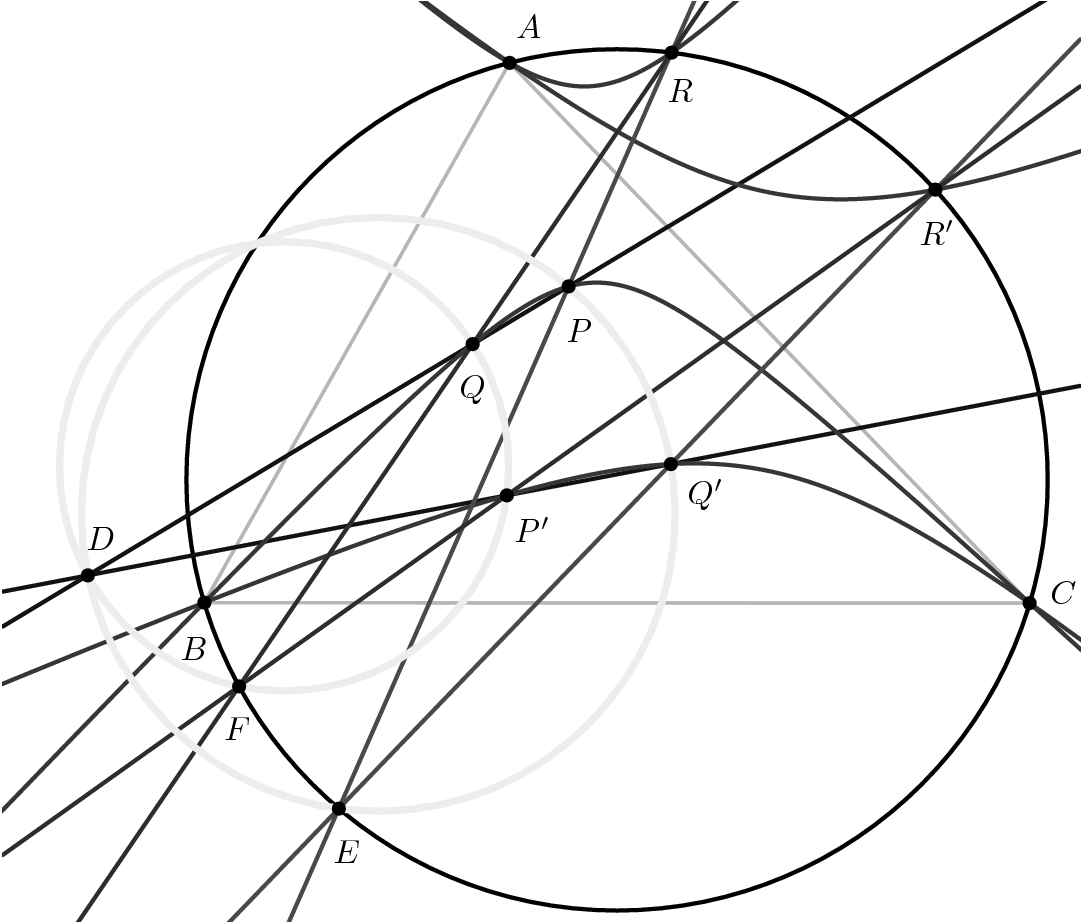}
    \caption{Noncollinear Isogonal Conjugates}
\end{figure}

\begin{subproof}
    Let $S, S'$ lie on $(ABC)$ such that $AS' \parallel PQ$ and $AS \parallel P'Q'$; let Let $R, R'$ lie on $(ABC)$ such that $BC \parallel RS \parallel R'S'$. Then (\cite{ref:Lemmas}, Delta 7.1) $R$ and $R'$ are the isogonal conjugates of the respective points of infinity along $P'Q'$ and $PQ$, and thus respectively lie on $(ABCPQ)$ and $(ABCP'Q')$.
    
    By applying isogonal conjugation on \Cref{two-circumconics}, if $RP$ meets $Q'R'$ at $E$ and $RQ$ meets $P'R'$ at $F$, then $E, F$ lie on $(ABC)$.
    
    Now, we take cases on whether or not three of $P, P', Q, Q'$ are collinear.
    
    \begin{figure}[!htbp]\centering
        \includegraphics[width=200pt]{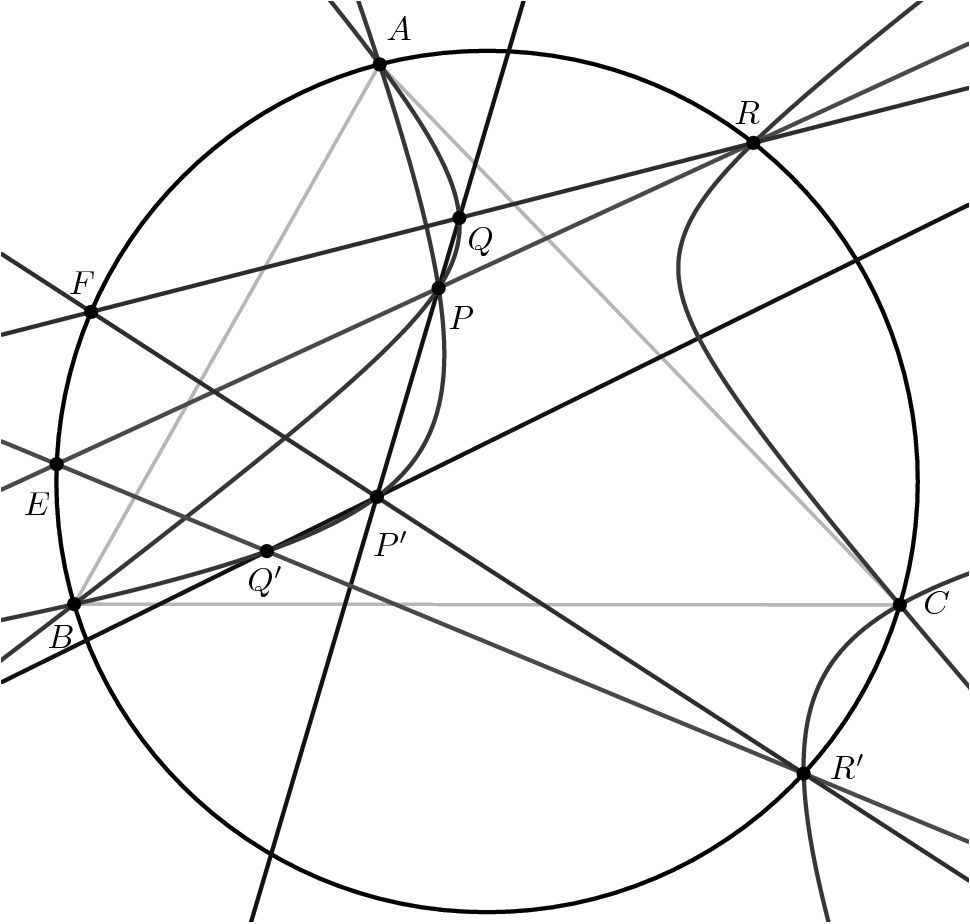}
    \caption{Three Collinear Points}
    \end{figure}
    
    If no three are collinear, then let $D = PQ \cap P'Q'$:
    \[\dang PEQ' = \dang RER' = \dang S'AS = \dang PDQ',\]
    so $E$ lies on $(DPQ')$. Similarly, $F$ lies on $(DP'Q)$. Let $(DPQ')$ meet $(DP'Q)$ at $X \ne D$; then
    \[\dang EXF = \dang EXD + \dang DXF = \dang EPD + \dang DQF = \dang RPQ + \dang PQR = \dang PRQ = \dang ERQ,\]
    so by (\cite{ref:EGMO}, Lemma 10.1), $X$ lies on $(ABC)$ as desired.
    
    Now, if three of the four points are collinear, then WLOG assume $P, P', Q$ are collinear; then $Q' \in (ABCPP')$, so $Q$ cannot lie on $PP'$. Thus
    \[\dang PEQ' = \dang RER' = \dang S'AS = \dang PP'Q',\]
    implying that $PP'Q'E$ is cyclic. We also have
    \[\dang QFP' = \dang RFR' = \dang S'AS = \dang QP'Q',\]
    implying that $P'Q'$ is tangent to $(P'QF)$. Let $(PP'Q'E)$ meet $(P'QF)$ at $X$; then
    \[\dang XPQ = \dang XQ'P', \quad XQP = \dang XP'Q',\]
    implying that $X$ is the desired spiral center. Finally,
    \[\dang EXF = \dang EXP' + \dang P'XF = \dang EPP' + \dang P'QF = \dang RPQ + \dang PQR = \dang ERF\]
    as desired.
\end{subproof}

We may now proceed to prove the main theorem.

\section{The Main Proof}

\begin{theorem}[Part (a)]\label{original-part-1}
    In $ABC$ with circumcircle $\Omega$, let $P, P'$ be isogonal conjugates. Let $Q$ lie on $PP'$. Let $\Omega$ meet $(ABCPP')$ at $D \ne A, B, C$ and $DP$ at $X \ne D$. Then $(PQX), (ABCPQ)$ are tangent at $P$.
\end{theorem}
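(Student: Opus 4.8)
The plan is to show that the two conics have the same tangent line at $P$; since both pass through $P$ and $Q$, this equality is exactly the asserted tangency. Write $t_1$ for the tangent to $\mathcal E:=(ABCPQ)$ at $P$ and $t_2$ for the tangent to the circle $(PQX)$ at $P$, each regarded as a line of the pencil through $P$. My first observation is that, as $Q$ varies along line $PP'$, both assignments $Q\mapsto t_1$ and $Q\mapsto t_2$ are \emph{projective}. Indeed, $P$ is a common base point of the pencil of circumconics through $A,B,C,P$ and of the pencil of circles through $P,X$, and it lies on the very line $PP'$ that $Q$ traverses; hence the residual intersection of a pencil member with $PP'$ (namely $Q$) is a projective function of the pencil parameter, while the tangent at $P$, being the polar of $P$, depends linearly on that same parameter. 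Consequently it suffices to verify the tangency for three positions of $Q$.

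To set up the key position I first translate the hypotheses through isogonal conjugation: since $\Omega$ is the isogonal image of the line at infinity and $(ABCPP')$ is the isogonal image of line $PP'$, their fourth common point $D$ is the isogonal conjugate of $\infty_{PP'}$, and the ($Q$-independent) point $X=DP\cap\Omega$ is the natural anchor. Taking $Q=\infty_{PP'}$, the circle $(PQX)$ degenerates to the line $PX$ (a circle through the fixed points $P,X$ whose third point recedes to infinity), while $\mathcal E$ degenerates to the circumconic $(ABCP\,\infty_{PP'})$, i.e.\ the isogonal image of line $P'D$. Thus, at this position, the theorem reduces to the sub-lemma that the tangent at $P$ to $(ABCP\,\infty_{PP'})$ is exactly line $PX$. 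I would prove this using the isogonal description together with the degenerate form of \Cref{isogonal-miquel} applied to the isogonal pair $(\infty_{PP'},D)$, which forces the relevant spiral center onto $\Omega$ and pins the tangent direction at $P$ to the chord $PX$.

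For the two remaining positions I would take $Q=P'$ and $Q\in\Omega\cap PP'$. At $Q=P'$ the claim becomes that $(ABCPP')$ and the circle $(PP'X)$ are tangent at $P$; here the circle's tangent obeys the tangent--chord relation $\dang(t_2,PP')=\dang(XP,XP')$, and I would match it to the tangent of $(ABCPP')$ by transporting the pencil cross ratio $\mathcal E(AB;CP)=(QA,QB;QC,PP')$ onto $\Omega$ through the fixed points $A,B,C$ and the anchor chord $PX$, reducing the comparison to inscribed angles in $\Omega$. When $Q$ is a point of $\Omega\cap PP'$, both $Q$ and $X$ lie on $\Omega$, so the circle $(PQX)$ meets $\Omega$ in recognizable points and the comparison again collapses to inscribed angles. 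Agreement at these three positions, together with the projectivity from the first paragraph, yields $t_1\equiv t_2$ and hence the tangency for every $Q$.

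The main obstacle is the bridge between the projective description of $t_1$ and the metric tangent--chord description of $t_2$: the cross-ratio identity defining $t_1$ involves $A,B,C$, whereas the circle contributes an inscribed-angle condition on $\Omega$, and the only conduit between the two is the pair $(D,X)$ supplied by isogonal conjugation. I expect the anchor sub-lemma at $Q=\infty_{PP'}$---that the tangent at $P$ to $(ABCP\,\infty_{PP'})$ is line $PX$---to be the crux, since it is there that the definition of $X$ via $D$ must be converted into a genuine tangency statement; the degenerate behaviour of the circle and the sign bookkeeping for directed angles are the remaining points requiring care. Once the three base cases are secured, the projectivity reduction makes the general case automatic.
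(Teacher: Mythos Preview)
Your projectivity reduction is sound: both $Q\mapsto t_1$ and $Q\mapsto t_2$ are honest projective maps from $PP'$ to the pencil at $P$, so three agreements would indeed suffice. That framework is genuinely different from the paper, which never varies $Q$ at all but instead names the common tangent explicitly: it introduces the isogonal conjugate $Q'$ of $Q$, uses the spiral center $M$ of \Cref{isogonal-miquel} (for the pairs $(P,P')$, $(Q,Q')$) to see that $PQ'$ is tangent to $(PQM)=(PQX)$, and then checks by a short cross-ratio chain through $\mathcal H=(ABCPP')$ that $PQ'$ is also tangent to $(ABCPQ)$.

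The gap in your proposal is that none of the three base cases is actually established, and the positions you pick are precisely the ones where the tools you invoke degenerate. At $Q=\infty_{PP'}$ one has $Q'=D$, so the spiral similarity $P\mapsto Q'$, $Q\mapsto P'$ has ratio $0$ and ``the degenerate form of \Cref{isogonal-miquel}'' gives no usable center; at $Q=P'$ one has $Q'=P$ and the spiral is the identity. In both cases you are left to prove directly that the tangent to $(ABCPQ)$ at $P$ is the line $PQ'$ (i.e.\ $PD$, respectively the tangent to $(ABCPP')$ at $P$), which is exactly the paper's conclusion specialized to those $Q$. The only argument I can see that settles your ``crux'' sub-lemma is the same cross-ratio computation the paper runs in general:
\[
\mathcal E(BC;\,Q,\,PQ'\cap\mathcal E)\stackrel{P}{=}\mathcal H(BC;P'Q')\stackrel{A}{=}(AB,AC;AP',AQ')=(AB,AC;AQ,AP)=\mathcal E(BC;QP),
\]
which already handles every $Q$ at once and makes the projectivity detour unnecessary. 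Likewise the third base case $Q\in\Omega\cap PP'$ gives no evident simplification: $Q\in\Omega$ does not put $Q$ on $(ABCPQ)$ in any special way, and ``collapses to inscribed angles'' is not a proof.

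In short: the projectivity idea is correct but does not reduce the difficulty; your anchor case is equivalent to the full theorem with $Q'=D$, and the paper's direct identification of the common tangent as $PQ'$ is both shorter and uniform in $Q$.
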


\begin{figure}[!htbp]\centering
    \includegraphics[width=200pt]{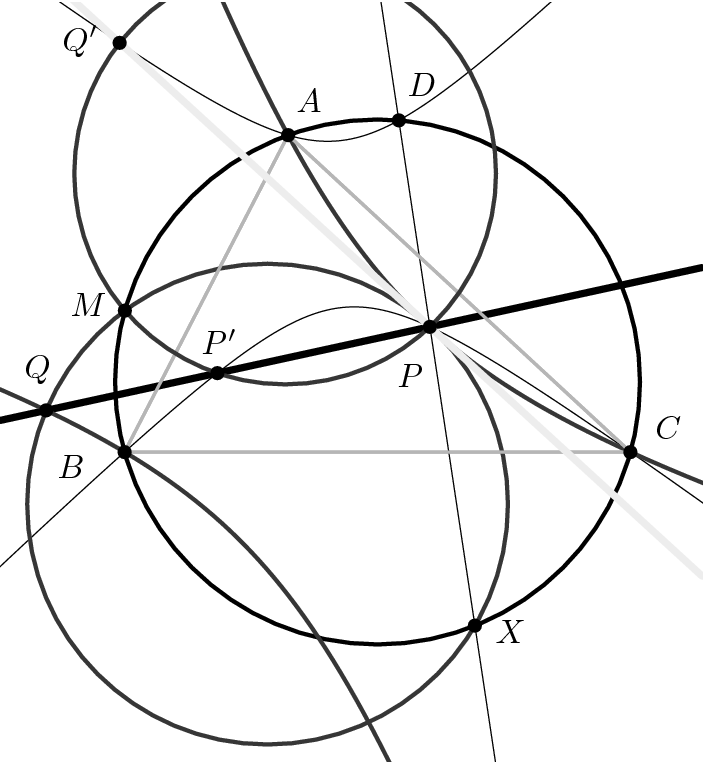}
    \caption{Proving part (a)}
\end{figure}

\begin{proof}
    
    Denote $(ABC)$ by $\Omega$. Let $Q$ have isogonal conjugate $Q'$. Let $M$ be the spiral center from $PQ$ to $Q'P'$; by \Cref{isogonal-miquel} $M \in \Omega$. By definition of spiral center, $PQ'$ is tangent to $(PQM)$, and $(PP'Q'M)$ is cyclic. Let $(PQM)$ meet $\Omega$ at $X' \ne M$. By properties of \Cref{isogonal-miquel}, $PX'$ passes through the intersection of $\Omega$ and $(ABCPP')$, which is precisely $D$, proving that $X \equiv X'$.
    
    This implies that $PQ'$ is tangent to $(PQX)$, so it suffices to prove that $PQ'$ is tangent to $(ABCPQ)$. To prove this, let $\mathcal H$ denote the circumconic of $ABCPP'Q'$ and $\mathcal E$ the circumconic of $ABCPQ$; then
    
    \begin{align*}
        \mathcal E(BC; Q, PQ' \cap \mathcal E) &\stackrel{P}{=} \mathcal H(BC; P'Q') \\
        &= (AB, AC; AP', AQ') \\
        &= (AC, AB; AP, AQ) \\
        &= (AB, AC; AQ, AP) \\
        &= \mathcal E(BC; QP).
    \end{align*}
    
    This is enough to imply that $PQ'$ is tangent to $\mathcal{E}$, proving (a).
\end{proof}

\begin{figure}[!htbp]\centering
    \includegraphics[width=200pt]{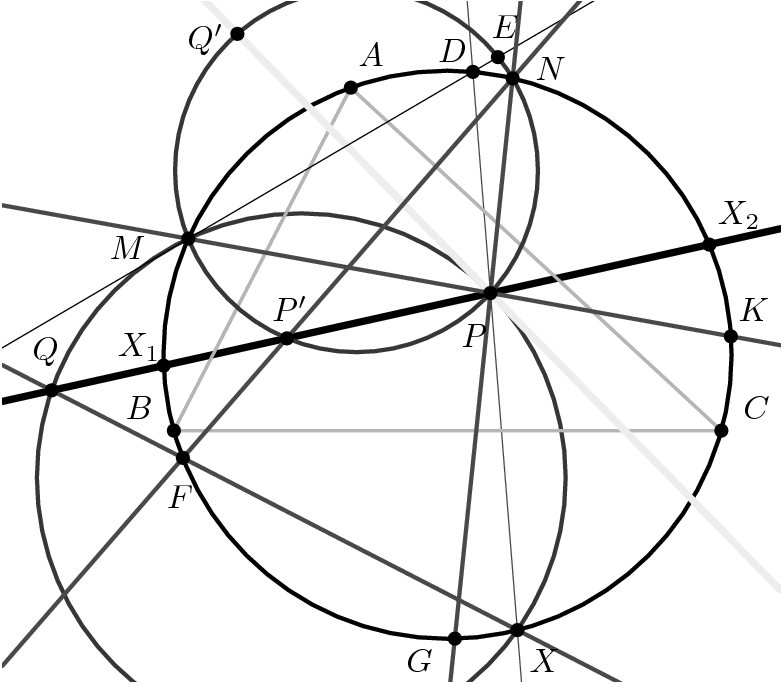}
    \caption{Proving part (b)}
\end{figure}

\begin{theorem}[Part (b)]\label{original-part-2}
    In $ABC$ with circumcircle $\Omega$, let $P, P'$ be isogonal conjugates. Let $Q$ lie on $PP'$. Let $\Omega$ meet $(ABCPP')$ at $D \ne A, B, C$. Let $DP$ meet $\Omega$ at $X$. Let $(PQX), (ABCPQ)$ meet at $L \ne P, Q$. Then $PL$ and $(ABCPX)$ intersect on the radical axis of $\Omega$ and $(PQX)$.
\end{theorem}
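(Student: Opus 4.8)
The plan is to leverage the tangency from Part (a) and the cross-ratio characterization of conics, treating $L$ as the second intersection of the two tangent conics $(PQX)$ and $(ABCPQ)$. Let me set up the key players: let $PL$ meet $(ABCPX)$ at a point $K$, and let $R$ denote the second intersection of $PL$ with... actually, let me think about what the radical axis of $\Omega = (ABC)$ and $(PQX)$ looks like. Since $X \in \Omega \cap (PQX)$, the radical axis passes through $X$; I would first locate the second radical-axis point, or more usefully, characterize membership on the radical axis via equal power.

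Let me reconsider the structure I'd actually exploit.

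The plan is to prove the concurrency by introducing the intersection point $T = PL \cap (\text{radical axis of } \Omega, (PQX))$ and showing $T$ lies on $(ABCPX)$, rather than the reverse. Since $X$ is a common point of $\Omega$ and $(PQX)$, the radical axis is the line through $X$ and the second intersection point of these two conics (or the tangent at $X$ if they are tangent there). First I would show that $L$, the second common point of $(PQX)$ and $(ABCPQ)$, and the line $PL$ interact with these conics in a controlled way: by Part (a), $P$ is a tangency point, so $P$ is a double intersection of $(PQX)$ and $(ABCPQ)$, meaning $L$ is genuinely the unique other intersection. I would then apply \Cref{simple-fact} or a direct cross-ratio chase: viewing $\Omega$, $(PQX)$, $(ABCPX)$, and $(ABCPQ)$ as four conics sharing various points, I would track a moving line through $P$ and compare the cross ratios it cuts.

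The cleanest route I see is a radical-axis / power-of-a-point argument combined with a cross-ratio identity. Let $K = PL \cap (ABCPX)$ with $K \ne P$, and let $W$ be the second intersection of line $PL$ with $\Omega$. I want to show $K$ has equal power to $\Omega$ and to $(PQX)$, i.e. $KX \cdot (\text{appropriate signed factor}) $ matches; concretely, since $X \in \Omega \cap (PQX)$, it suffices to show $K$, $X$, and the second common point of $\Omega, (PQX)$ are collinear, which reduces to showing $KX$ passes through that second radical-center point. I would identify this second point $N$ as $PX \cap \Omega = D$... but $D \in \Omega$ only, so instead $N$ is the second intersection of $(PQX)$ with $\Omega$; call it $N$, so the radical axis is $XN$. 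The key step is then to prove $K \in XN$, i.e. that $PL \cap (ABCPX)$, the point $X$, and $N$ are collinear.

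The main obstacle will be pinning down this collinearity $K, X, N$ cleanly in the directed-angle/cross-ratio framework, since it mixes four different conics through $P$ and the tangency condition from Part (a). I expect to discharge it by a Pascal-type argument on a degenerate hexagon inscribed in $(ABCPX)$ (using $P$ twice to encode the line $PL$) or, more symmetrically, by a cross-ratio chase: projecting from $P$, the tangent direction $PL$ to $(PQX)$ corresponds under the tangency of Part (a) to the same direction on $(ABCPQ)$, and this shared tangent data forces the target cross ratio on $(ABCPX)$ defining $K$ to equal the one defining the radical-axis intersection on $\Omega$. Once that single collinearity is established, equality of powers at $K$ is immediate, placing $K$ on the radical axis and completing the proof.
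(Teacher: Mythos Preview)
Your setup is correct and matches the paper: since $X$ lies on both $\Omega$ and $(PQX)$, the radical axis is the line $XN$ where $N$ is the second common point of $\Omega$ and $(PQX)$, and the whole problem reduces to the collinearity of $K = PL \cap (ABCPX)$, $X$, and $N$. That reformulation is exactly what the paper proves (in the paper's notation your $N$ is the spiral center $M$ from Part (a), and the radical axis is the line $MX$).

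The gap is that you do not actually prove this collinearity; you only gesture at ``a Pascal-type argument on a degenerate hexagon'' or ``a cross-ratio chase projecting from $P$'' without carrying either out. The sentence about ``the tangent direction $PL$ to $(PQX)$'' is a red flag: $PL$ is a secant of $(PQX)$, not a tangent, since $L$ lies on that circle by definition; the tangency in Part (a) only says the two conics share a tangent line at $P$, and that tangent is $PQ'$, not $PL$. So the mechanism you sketch---transferring a tangent direction from one conic to the other---does not apply to the line $PL$ at all, and your cross-ratio chase has no identified starting equality.

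The paper's proof of this collinearity is considerably heavier than you seem to anticipate. It brings in the isogonal conjugate $Q'$ of $Q$, the second circle $(PP'Q')$ through $M$, its second intersection $N$ with $\Omega$ and intersection $E$ with line $MD$, and then runs a Desargues Involution argument on line $PP'$ (with respect to two inscribed quadrilaterals in $\Omega$) to locate concurrences on $\Omega$. A long cross-ratio chain then shows the auxiliary point $E$ lies on the isogonal circumconic $(ABCPP')$; only after that does Reim give $P, E, L$ collinear, and \Cref{simple-fact} finishes by forcing $PE \cap (ABCPX)$ onto $MX$. None of this machinery---$Q'$, the second circle, the involution, the point $E$---appears in your plan, and there is no evident shortcut that bypasses it. As written, your proposal stops precisely at the hard step.
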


\begin{proof}
    
    Let $(PP'Q')$ meet $\Omega$ at $N \ne M$ and $MD$ at $E$. By properties of \Cref{isogonal-miquel}, $NQ'$ passes through the intersection of $(ABC)$ and $(ABCPP')$, which is precisely $D$. Let $\Omega$ meet $NP$, $NP'$, $XQ$, $MP$ at $G, F, F', K$ respectively. By Reim's Theorem, $PP'$ is parallel to both $FK$ and $F'K$, implying that $F \equiv F'$, i.e. $QX, NP'$ concur at a point $F$ on $\Omega$.
    
    Let $PP'$ meet $\Omega$ at (possibly complex) points $X_1$ and $X_2$, and $MN$ at $J$. The key claim is that
    \[(NQ, ND) \quad (NP, NP') \quad (NM, N\infty_{PP'}) \quad (NX_1, NX_2)\]
    are pairs of an involution. To prove this, Desargues' Involution Theorem (\cite{ref:Elementary}, 125) on $NDFX$ yields reciprocal pairs
    \[(Q, ND \cap PP') \quad (P, P') \quad (X_1, X_2)\]
    and applying Desargues' Involution Theorem on $NMKF$ yields reciprocal pairs
    \[(J, \infty_{PP'}) \quad (P, P') \quad (X_1, X_2).\]
    Hence the aforementioned four reciprocal pairs indeed comprise a single involution. Let $\Omega$ meet $NQ$ at $R$ and $N\infty_{PP'}$ at $S$; then by projecting the involution from $N$, $MS, DR, FG$ concur at a point $T$. Denote $(ABCPP'), (PP'Q'), (PP'DQ'E)$ by $\mathcal H, \gamma, \mathcal H'$ respectively; then
    
    \begin{figure}[!htbp]\centering
        \includegraphics[width=200pt]{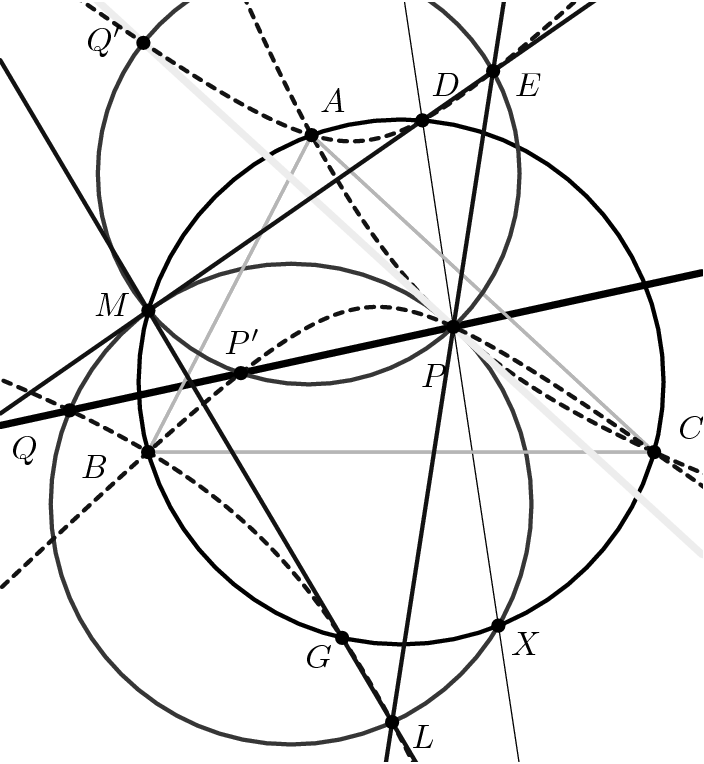}
        \caption{Second diagram for part (b)}
    \end{figure}
    
    \begin{align*}
        \mathcal H(PP'; DQ') &\stackrel{A}{=} (AP', AP; A\infty_{PP'}, AQ) \\
        &\stackrel{A}{=} (P'P; \infty_{PP'}Q) \\
        &\stackrel{N}{=} \Omega(FG; SR) \\
        &\stackrel{T}{=} \Omega(GF; MD) \\
        &\stackrel{N}{=} \gamma(PP'; MQ') \\
        &\stackrel{E}{=} \gamma(EP, EP'; EM, EQ') \\
        &\stackrel{E}{=} \mathcal H'(PP'; DQ')
    \end{align*}
    In other words, $\mathcal H(PP'; DQ') = \mathcal H'(PP'; DQ')$. Thus by definition of a conic (\cite{ref:Cross}), $E \in \mathcal H$.
    
    Let $MG$ meet $(PQM)$ at $H \ne M$. By Reim (\cite{ref:Reim}), both $PH$ and $PE$ are parallel to $DG$, hence $P \in EH$. By properties of \Cref{isogonal-miquel}, $NP$ passes through the intersection of $\Omega$ with $(ABCPQ)$, i.e. $G \in (ABCPQ)$. It is suddenly clear that $H \in (ABCPQ)$ by \Cref{simple-fact}, i.e. $H \equiv L$. Finally, by \Cref{simple-fact} once again with line $PE$ and $\Omega$, the intersections of $(ABCPX)$ with $PE$ and $\Omega$ are collinear with $M$ - i.e., $PE$ meets $MX$ on $(ABCPX)$, completing (b).

\end{proof}

The remainder of our paper will be dedicated to showing each of the aforementioned results.

\section{Circles Tangent to Circumconics at Exactly Two Points}

Having shown the main result in our paper, we will now prove our second main theorem, reiterating \Cref{second-main}.

\begin{theorem}\label{two-tangency-points}
    For $ABC$ with circumcircle $\Omega$ and $P$ with isogonal conjugate $P'$, let $D = \Omega \cap (ABCPP')$. Let $X = DP \cap \Omega, Y = DP' \cap \Omega$. Let $(PXY)$ meet $PP'$ at $Z$.
    
    (a) $(PXY)$ and $(ABCPZ)$ are tangent at $P$ and $Z$.
    
    (b) Let $(PXY)$ meet $(ABCPX)$ at $Z_1, Z_2$. Let $\Omega$ meet $XZ_1, XZ_2$ at $Z_1', Z_2'$. Then $Z_1'Z_2'$ is tangent to $(ABCPP')$ at $P'$.
\end{theorem}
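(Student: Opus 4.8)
The plan is to build both parts on \Cref{original-part-1,original-part-2}, together with \Cref{simple-fact,isogonal-miquel}. Write $\omega=(PXY)$. Since $Z$ is the second intersection of $\omega$ with $PP'$, we have $\omega=(PXZ)=(PXYZ)$; set $\mathcal E=(ABCPZ)$, $\mathcal F=(ABCPX)$, $\mathcal H=(ABCPP')$, and let $W$ be the isogonal conjugate of $Z$, so $W\in\mathcal H$ (the isogonal image of line $PP'$). Then tangency at $P$ in part (a) is immediate: applying \Cref{original-part-1} with $Q:=Z$, and noting $\omega=(PZX)$, gives that $\omega$ and $\mathcal E$ are tangent at $P$, with common tangent line $PW$.

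The crux of part (a) is tangency at $Z$, and I first harvest more from that same application. In the proof of \Cref{original-part-1} the spiral center $M$ from $PZ$ to $WP'$ lies on $\Omega$ and on $\omega=(PZX)$; but $\omega\cap\Omega=\{X,Y\}$, so in fact $M=Y$, and moreover $P,P',W,Y$ are concyclic. It then remains to show that the tangents to $\omega$ and to $\mathcal E$ at $Z$ agree. Writing $t$ for the tangent to $\omega$ at $Z$ and projecting $\omega(PX;YZ)$ from $Z$ and from $P$ gives
\[(ZP,ZX;ZY,t)=\omega(PX;YZ)=(PW,PX;PY,PZ),\]
while the tangent $\tau$ to $\mathcal E$ at $Z$ satisfies
\[(ZA,ZB;ZC,\tau)=\mathcal E(AB;CZ)=(PA,PB;PC,PZ).\]
The obstacle is to bridge the two reference frames $\{X,Y\}$ and $\{A,B,C\}$ of the pencil at $Z$. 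I will do this by projecting from $M=Y$, which lies on both $\Omega$ and $\omega$, and transporting cross ratios along these two circles by Reim's theorem and the spiral similarity at $Y$ (carrying $PZ\!\to\!WP'$), in the spirit of the main proof; matching these frames is the main difficulty.

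For part (b), let $t'$ be the tangent to $\mathcal H$ at $P'$. Since $Z_1',Z_2'\in\Omega$, it suffices to prove $Z_1',Z_2'\in t'$, for then $Z_1'Z_2'=t'$ is tangent to $\mathcal H$ at $P'$. The line $t'$ is characterized by
\[(P'A,P'B;P'C,t')=\mathcal H(AB;CP')=(PA,PB;PC,PP'),\]
so every $V\in t'\cap\Omega$ obeys $(P'A,P'B;P'C,P'V)=(PA,PB;PC,PP')$. I will show that for each of the two points $V\in t'\cap\Omega$ the second intersection of $XV$ with $\omega$ lies on $\mathcal F$; that point is then one of $Z_1,Z_2$, so $V\in\{Z_1',Z_2'\}$, and the two choices of $V$ force $t'\cap\Omega=\{Z_1',Z_2'\}$. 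To verify this membership I will run a cross-ratio chase organized around the perspectivity from $X$ that carries $\omega$ to $\Omega$, fixes $X,Y$, and sends $P\mapsto D$ (as $D,P,X$ are collinear); the key structural input is that $\mathcal E,\mathcal F,\mathcal H$ all lie in the pencil of conics through $A,B,C,P$, with \Cref{simple-fact} (base conic $\Omega$, moving point on $\Omega$) converting the desired membership into the displayed condition on $V$. As in part (a), the technical heart is the frame change—now among the pencils at $X$, at $P$, and at $P'$—which I expect to parallel the computation in \Cref{original-part-2}.
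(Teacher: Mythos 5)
Your proposal for part (a) secures the tangency at $P$ exactly as the paper does (apply \Cref{original-part-1} with $Q:=Z$), and your identification of the spiral center $M$ with $Y$ is correct and matches the paper. But the tangency at $Z$ --- the actual content of part (a) --- is left unproven: your two displayed cross-ratio identities are merely the definitional tangency conditions at $Z$ for $\omega$ and for $\mathcal E$ restated, and you explicitly defer the ``frame change'' between the pencil $\{ZP, ZX, ZY\}$ and the pencil $\{ZA, ZB, ZC\}$. There is no evident mechanism for this bridge: $\omega$ and $\mathcal E$ share only $P$ and $Z$, so no single projection transports one cross ratio into the other, and projecting from $Y$ moves you to $\Omega$, which does not contain $Z$. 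The paper's key idea, absent from your outline, is to prove tangency at $Z$ \emph{by contradiction using \Cref{original-part-2}}: if $Z^*\ne P,Z$ were a further intersection of $\omega$ and $\mathcal E$, then $PZ^*$ meets $\mathcal C_1=(ABCPX)$ on the radical axis of $\Omega$ and $\omega$, which is the line $XY$; a one-line chase $\mathcal C_1(BC;AI)\stackrel{P}{=}\mathcal H(BC;AP')\stackrel{D}{=}\Omega(BC;AY)\stackrel{X}{=}\mathcal C_1(B,C;A,XY\cap\mathcal C_1)$ shows that $I=PZ\cap\mathcal C_1$ also lies on $XY$, so $PZ$ and $PZ^*$ both pass through $I$, forcing $Z^*=Z$. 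Without this (or an equivalent) your part (a) is an unfinished plan, not a proof.

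Part (b) has the same character: your reduction --- show that each $V\in t'\cap\Omega$ sends the second intersection of $XV$ with $\omega$ onto $\mathcal F$ --- is a reasonable reformulation, but the verification is precisely where all the difficulty lives, and you defer it (``I expect to parallel the computation''). The paper's route requires three genuinely new ingredients that your sketch does not supply: (i) $Z_1Z_2\parallel PZ$, proved via \Cref{circle-conic} (circles through two fixed points of a conic cut parallel chords) together with the part (a) tangency, which forces $P$ and $Z$ equidistant from the center of $\mathcal E$ and ultimately yields the tangent circle $(PXI)$ seeding the parallel family (\Cref{vary-conic}(a)); (ii) the collinearity of $P', Z_1', Z_2'$, proved by projecting the involution of pairs $(X_1,X_2)$ from $X$ to $\Omega$ and checking it on the two explicit pairs $(P,I)$ and $(A,P_A')$ (\Cref{vary-conic}(b)); and (iii) the identification of the direction of the tangent to $\mathcal H$ at $P'$ via the auxiliary point $U=\Omega\cap PY$, where one shows $PP'$ is tangent to $(ABCPU)$ and finishes with an isogonality angle chase, transferring the parallel by \Cref{parallel-circles}. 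Note also that the paper's (b) structurally depends on (a), so your two gaps compound: until tangency at $Z$ is actually established, even the parallelism $Z_1Z_2\parallel PZ$ is out of reach.
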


To prove this, we once again start with a simple lemma.

\begin{lemma}\label{parallel-circles}
    Two circles $\Omega, \omega$ meet at a point $X$. Let $A, B, C, D$ lie on $\omega$ such that $AB \parallel CD$. Let $\Omega$ meet $XA, XB, XC, XD$ at $E, F, G, H$. Then $EF \parallel GH$.
\end{lemma}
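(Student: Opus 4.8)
The plan is to read the lemma as a statement about the projection map from $X$ between the two circles and to show that this map rotates all chord directions by one fixed angle. I would work entirely with directed angles mod $180^\circ$, writing $\dang(\ell,\ell')$ for the directed angle between two lines $\ell,\ell'$, letting $\ell_P$ denote the line $XP$, and letting $t_\omega,t_\Omega$ be the tangents to $\omega,\Omega$ at $X$. The second intersection $Y$ of $\Omega$ and $\omega$ will play the role of the center of the relevant spiral similarity, though it is not strictly needed for the angle computation.

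The heart of the argument is the single identity
\[\dang(AB, EF) = \dang(t_\Omega, t_\omega),\]
whose right-hand side is the fixed angle between the two circles at $X$ and is in particular independent of the chord $AB$. To prove it I would invoke the tangent–chord form of the inscribed angle theorem: for a chord $XZ$ of a circle through $X$ and any third point $W$ on that circle, $\dang(WX, WZ) = \dang(t, XZ)$, obtained by letting $W \to X$ in the constant inscribed angle subtending $XZ$. Applied in $\omega$ with $W = A$ on chord $XB$ this gives $\dang(AX, AB) = \dang(t_\omega, XB)$, and applied in $\Omega$ with $W = E$ on chord $XF$ it gives $\dang(EX, EF) = \dang(t_\Omega, XF)$. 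Because $E \in \ell_A$ and $F \in \ell_B$, the lines satisfy $AX = EX$ and $XB = XF$, so decomposing $\dang(AB, EF) = \dang(AB, AX) + \dang(EX, EF)$ makes the common line $XB = XF$ cancel and leaves exactly $\dang(t_\Omega, t_\omega)$.

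With this in hand the lemma follows immediately: the same computation applied to $CD$ and $GH$ yields $\dang(CD, GH) = \dang(t_\Omega, t_\omega)$ as well, so
\[\dang(EF, GH) = \dang(EF, AB) + \dang(AB, CD) + \dang(CD, GH) = -\dang(t_\Omega,t_\omega) + 0 + \dang(t_\Omega,t_\omega) = 0,\]
using $AB \parallel CD$, whence $EF \parallel GH$. Conceptually this says the map $P \mapsto (\text{second meet of } XP \text{ with } \Omega)$ is the spiral similarity centered at $Y$ carrying $\omega$ to $\Omega$ — the rotation angle $\dang(YP, Y\phi(P))$ and the ratio $|Y\phi(P)|/|YP|$ are both constant, again by inscribed angles on the fixed chord $XY$ — and similarities preserve parallelism. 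I expect the only real obstacle to be bookkeeping: confirming that $\dang(AB,EF)$ is genuinely chord-independent (the cancellation above is the crux) and handling the degenerate positions — the circles tangent at $X$, a chord through $X$, or $E,F,G,H$ not all distinct — where the directed-angle identities must be read in their limiting tangent form.
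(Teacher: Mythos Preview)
Your proof is correct. Both your argument and the paper's are directed-angle chases via the inscribed angle theorem, but the routes differ slightly. The paper gives the one-line computation
\[
\dang EFH = \dang EXH = \dang ACD = \dang CDB = \dang CXB = \dang GHF,
\]
using only inscribed angles in $\Omega$, then in $\omega$, the parallelism $AB\parallel CD$ for the middle equality, and inscribed angles back again. Your approach instead proves the stronger and more structural fact that $\dang(AB,EF)=\dang(t_\Omega,t_\omega)$ is the fixed angle between the two circles at $X$, so the projection $\omega\to\Omega$ from $X$ rotates every chord direction by the same amount; the lemma is then immediate. This is essentially the spiral-similarity interpretation you mention (center at the second intersection $Y$), and it buys you a reusable statement: any two parallel chords of $\omega$ map to parallel chords of $\Omega$, not just the particular pair in the lemma. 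The paper's version is terser but less illuminating about why the result holds.
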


\begin{subproof}
$\dang EFH = \dang EXH = \dang ACD = \dang CDB = \dang CXB = \dang GHF$.
\end{subproof}

We proceed to prove part (a). Denote $(ABCPP'), (ABCPZ), (ABCPX), (PXY)$ by $\mathcal H, \mathcal E, \mathcal C_1, \gamma$ respectively.

\begin{proof}
    By treating $Z$ as $Q$ as in \Cref{original-statement}, we deduce $\gamma$ and $\mathcal E$ tangent at $P$; it suffices to prove the two tangent at $Z$.
    
    Assume the contrary - that $\gamma$ and $\mathcal E$ meet at a point $Z^* \ne P, Z$. By construction, $Y$ is the spiral center from $PZ$ to $Z'P'$, where $Z'$ is the isogonal conjugate of $Z$. Then by \Cref{original-part-2}, $PZ^*$ and $XY$ meet on $\mathcal C_1$. Let $PZ$ meet $\mathcal C_1$ at $I$. We wish to show $I \in XY$. To prove this,
    \[\mathcal C_1(BC; AI) \stackrel{P}{=} \mathcal H(BC; AP') \stackrel{D}{=} \Omega(BC; AY) \stackrel{X}{=} \mathcal C_1(B, C; A, XY \cap \mathcal C_1),\]
    so $I$ lies on $XY$. Thus $I$ lies on both $PZ$ and $PZ^*$, so we can characterize both $Z$ and $Z^*$ as the unique intersection of $PI$ and $\mathcal E$ other than $P$. This is the desired contradiction.
\end{proof}

A corollary of (a) is that by \Cref{original-part-2}, $YZ$ passes through $\Omega \cap \mathcal E$, which we will denote as $G$. By Reim, we notably have $DG \parallel PZ$.

Before we proceed to part (b), we first state another lemma.

\begin{figure}[!htbp]\centering
    \includegraphics[width=250pt]{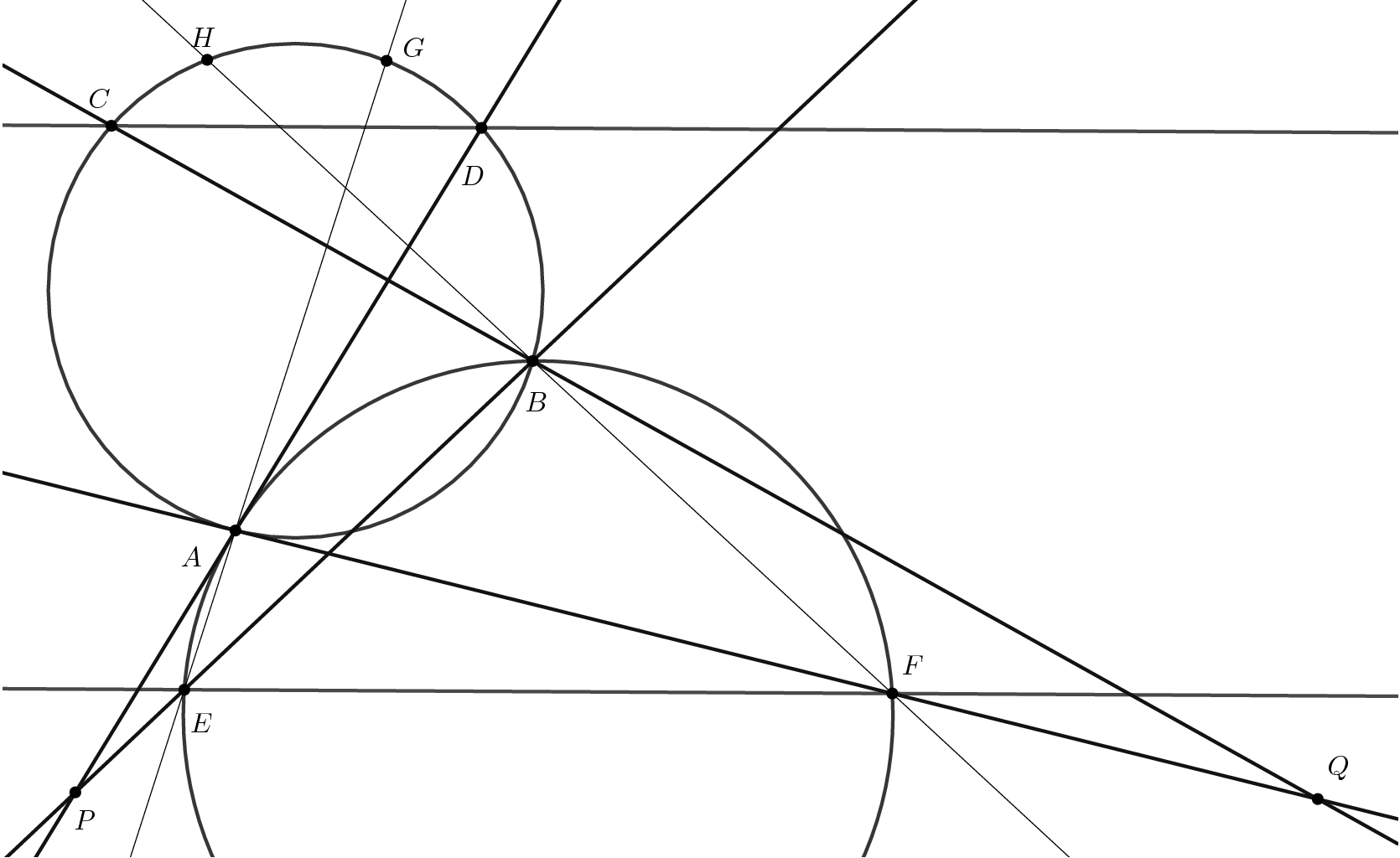}
    \caption{Circles Yielding a Family of Parallel Lines}
\end{figure}

\begin{lemma}\label{circle-conic}
    For conic $\mathcal C$ containing two points $A$ and $B$, let $C, D$ vary on $\mathcal C$ such that $ABCD$ is cyclic. Then $CD$ is parallel to a fixed line.
\end{lemma}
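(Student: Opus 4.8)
The plan is to pass to the line at infinity $\ell_\infty$ and exploit that every circle passes through the two circular points $I=(1:i:0)$ and $J=(1:-i:0)$. With $A,B,C,D$ concyclic, the circle $(ABCD)$ and the conic $\mathcal C$ are then two distinct members of the pencil of conics through the base points $A,B,C,D$, and the third ``visible'' member of this pencil is the degenerate conic $AB\cup CD$. The idea is to show that the only freedom in $C,D$ is absorbed by a single involution on $\ell_\infty$ that turns out not to depend on $C,D$ at all.

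Concretely, I would apply Desargues' Involution Theorem (\cite{ref:Elementary}, 125) to this pencil of conics through $A,B,C,D$, taking the transversal to be $\ell_\infty$. The conics of the pencil then cut $\ell_\infty$ in the pairs of a single involution $\iota$, and reading off the three members named above shows that
\[\{\infty_{\mathcal C,1},\infty_{\mathcal C,2}\}, \qquad \{I,J\}, \qquad \{\infty_{AB},\infty_{CD}\}\]
are each pairs of $\iota$, arising respectively from $\mathcal C$, from the circle $(ABCD)$, and from the line pair $AB\cup CD$.

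The crux is then immediate: as $C,D$ vary subject to $ABCD$ being cyclic, the first two pairs never move. Indeed $\{\infty_{\mathcal C,1},\infty_{\mathcal C,2\}}=\mathcal C\cap\ell_\infty$ depends only on the fixed conic $\mathcal C$, and $\{I,J\}$ is common to every circle. Since an involution of a line is determined by any two of its distinct pairs, $\iota$ is one fixed involution independent of the choice of $C,D$. As $\infty_{AB}$ is also fixed (because $A,B$ are), the point $\infty_{CD}=\iota(\infty_{AB})$ is the image of a fixed point under a fixed map, hence fixed; this is exactly the assertion that $CD$ is parallel to a fixed line.

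I expect the only real obstacle to be verifying that the two ``anchor'' pairs $\{I,J\}$ and $\mathcal C\cap\ell_\infty$ are genuinely distinct, since that distinctness is what forces $\iota$ to be uniquely determined. These pairs coincide exactly when $\mathcal C$ passes through both circular points, i.e.\ when $\mathcal C$ is itself a circle — in which case the conclusion in fact fails, as $C,D$ then range over all chords of $\mathcal C$. So the statement must be read for a conic $\mathcal C$ that is not a circle, which is precisely the situation in the intended application; granting this the involution argument is complete and requires no computation. (As a sanity check one may note that any involution of $\ell_\infty$ swapping $I$ and $J$ has two perpendicular fixed directions, so $\iota$ is of the expected geometric type.)
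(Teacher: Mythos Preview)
Your proof is correct and takes a genuinely different route from the paper. The paper instead proves the equivalent statement that two cyclic quadrilaterals $ABCD$ and $ABEF$ with $CD\parallel EF$ must lie on a common conic: it introduces auxiliary points $G,H$ on $(ABCD)$ and $P=AD\cap BE$, $Q=AF\cap BC$, uses Reim and an angle chase to get $ABPQ$ cyclic (hence $PQ\parallel CD$), and then applies the converse of Pascal on $CDAFEB$. Your argument is more direct and more conceptual: by reading Desargues' Involution on $\ell_\infty$ for the pencil through $A,B,C,D$, the two anchor pairs $\{I,J\}$ and $\mathcal C\cap\ell_\infty$ pin down the involution once and for all, and $\infty_{CD}=\iota(\infty_{AB})$ drops out with no computation. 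This also makes the failure case explicit (precisely when $\mathcal C$ is a circle), which the paper's argument does not surface. The trade-off is that the paper's version stays over the reals and within the purely synthetic style announced in the introduction, whereas yours routes through the complex circular points; since the paper elsewhere already allows possibly complex intersection points, that is not a real objection here.
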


\begin{subproof}
    It suffices to prove that, for two cyclic quadrilaterals $ABCD$ and $ABEF$ with $CD \parallel EF$, then $ABCDEF$ is circumscribed by a single conic. To prove this, let $(ABCD)$ meet $AE, BF$ at $G, H$; $AD$ meet $BE$ at $P$; and $AF$ meet $BC$ at $Q$. Then $GH \parallel EF$ by Reim. From
    \[\dang APB = \dang DAE + \dang AEB = \dang DAG + \dang AFB = \dang HBC + \dang QFB = \dang AQB,\]
    follows $ABPQ$ cyclic, so $PQ \parallel CD$ by Reim. By converse Pascal (\cite{ref:Pascal}) on $CDAFEB$, since $P, Q, CD \cap EF$ are collinear, $ABCDEF$ is circumscribed by a single conic as desired.
\end{subproof}

The above lemma implies the following:

\begin{lemma}\label{vary-conic}
    Let $X_1, X_2$ vary on $\mathcal C_1$ such that $PXX_1X_2$ is cyclic.
    
    (a) $X_1X_2 \parallel PZ$.
    
    (b) If $\Omega$ meets $XX_1, XX_2$ at $X_1', X_2'$, then $P', X_1', X_2'$ are collinear.
\end{lemma}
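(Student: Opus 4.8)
The plan is to read part (a) directly off \Cref{circle-conic} and then derive part (b) by transporting an involution through the common point $X$ of $\mathcal{C}_1$ and $\Omega$; the only genuine work in either part is to identify an otherwise-anonymous fixed object.

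For (a), I note that $\mathcal{C}_1=(ABCPX)$ is a conic carrying the two points $P$ and $X$, and the requirement that $PXX_1X_2$ be cyclic is precisely the hypothesis of \Cref{circle-conic} with $(A,B,C,D)\mapsto(P,X,X_1,X_2)$. Hence $X_1X_2$ keeps a fixed direction as the pair varies, and (a) collapses to identifying that direction with that of $PZ$ (equivalently $PP'$, since $Z\in PP'$). To pin it I would evaluate on one admissible chord: the circle $\gamma=(PXY)$ passes through $P$ and $X$ and so cuts $\mathcal{C}_1$ again at the two points $Z_1,Z_2$ of \Cref{two-tangency-points}, so the fixed direction is that of $Z_1Z_2$, and it remains to verify $Z_1Z_2\parallel PP'$. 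For this I would project from $X$ (which lies on both $\mathcal{C}_1$ and $\Omega$) and run a cross-ratio chase of the same shape as in the proof of \Cref{two-tangency-points}(a), using $XP\cap\Omega=D$ and the corollary $DG\parallel PZ$.

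For (b), projection from $X$ is a projective isomorphism $\phi\colon\mathcal{C}_1\to\Omega$ carrying $X_1\mapsto X_1'$, $X_2\mapsto X_2'$, and $P\mapsto XP\cap\Omega=D$. By (a), together with the bijection between circles through $P,X$ and chords of $\mathcal{C}_1$ of the fixed direction, the admissible pairs $(X_1,X_2)$ are exactly the pairs of the involution $\rho$ on $\mathcal{C}_1$ cut by the pencil of lines through $\infty_{PP'}$. Its transport $\rho'=\phi\rho\phi^{-1}$ is an involution on $\Omega$; since every involution on a conic is a projection from a single point, all chords $X_1'X_2'$ pass through one point $O$, and it suffices to show $O=P'$. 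Feeding in the pair $(P,\tilde P)$ with $\tilde P=PP'\cap\mathcal{C}_1$, the cross-ratio identity already established inside the proof of \Cref{two-tangency-points}(a) gives $\tilde P\in XY$, whence $\phi(\tilde P)=XY\cap\Omega=Y$; so $\rho'$ pairs $D$ with $Y$ and $O$ lies on the line $DY=DP'$. A second admissible pair, treated by the same cross-ratio identity that placed $\tilde P$ on $XY$, yields a second chord of $\rho'$ through $P'$ distinct from $DP'$, forcing $O=P'$ and finishing (b).

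The main obstacle is exactly this identification: the existence of a fixed direction in (a) and of a fixed concurrency point in (b) is automatic, and everything else is bookkeeping with $\phi$ and the involution it carries, but showing that these equal $PP'$ and $P'$ is where the configuration does its work. Pleasantly, both reductions funnel into a single cross-ratio identity, essentially $PP'\cap\mathcal{C}_1\in XY$, which is precisely the computation performed in the proof of \Cref{two-tangency-points}(a); so the substantive content is already available, and the remaining task is to assemble it through the projective isomorphism $\phi$.
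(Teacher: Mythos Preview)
Your skeleton matches the paper's: reduce (a) to \Cref{circle-conic} and then identify the fixed direction; for (b) transport the chord involution on $\mathcal C_1$ through $X$ to an involution on $\Omega$ and pin its centre with two explicit pairs, the first being $(P,I)\mapsto(D,Y)$ via $I:=PP'\cap\mathcal C_1\in XY$. Where your proposal falls short is exactly the two identification steps you flag as ``the only genuine work'' and then do not perform.

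For (a), evaluating on the chord $Z_1Z_2$ cut by $\gamma=(PXY)$ is circular: in the paper, $Z_1Z_2\parallel PZ$ is \emph{derived from} \Cref{vary-conic}(a) in the proof of \Cref{two-tangency-points}(b), not the other way around, and the bare identity $I\in XY$ does not by itself yield that direction. The paper instead identifies the direction via the degenerate pair $(P,I)$, which requires showing that $(PXI)$ is tangent to $\mathcal C_1$ at $P$; this takes a genuine construction (the point $G'=DG\cap\mathcal E$, the isosceles trapezoid $PZGG'$ coming from the double tangency of $\gamma$ and $\mathcal E$, a cross-ratio showing $PG'$ is tangent to $\mathcal C_1$, and an angle chase through $Y'=PG'\cap\gamma$). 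The input $I\in XY$ enters only at the last step of that chase, so it is necessary but far from sufficient. For (b), your first pair is exactly the paper's, but ``a second admissible pair, treated by the same cross-ratio identity'' is not enough: the paper manufactures the pair $(A,A_1)$ with $A_1\in\mathcal C_1$ on the line through $A$ parallel to $PP'$, and shows $XA_1$ meets $\Omega$ at $P'_A:=AP'\cap\Omega$ by a \emph{different} cross-ratio computation (using the reflection $\Omega(BC;P_AD_A)=\Omega(BC;DP'_A)$, i.e.\ isogonality at $A$), which is not of the same shape as the $I\in XY$ identity. Until you actually carry out these two identifications, the proof is a correct outline with its two load-bearing lemmas missing.
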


\begin{figure}[!htbp]\centering
    \includegraphics[width=200pt]{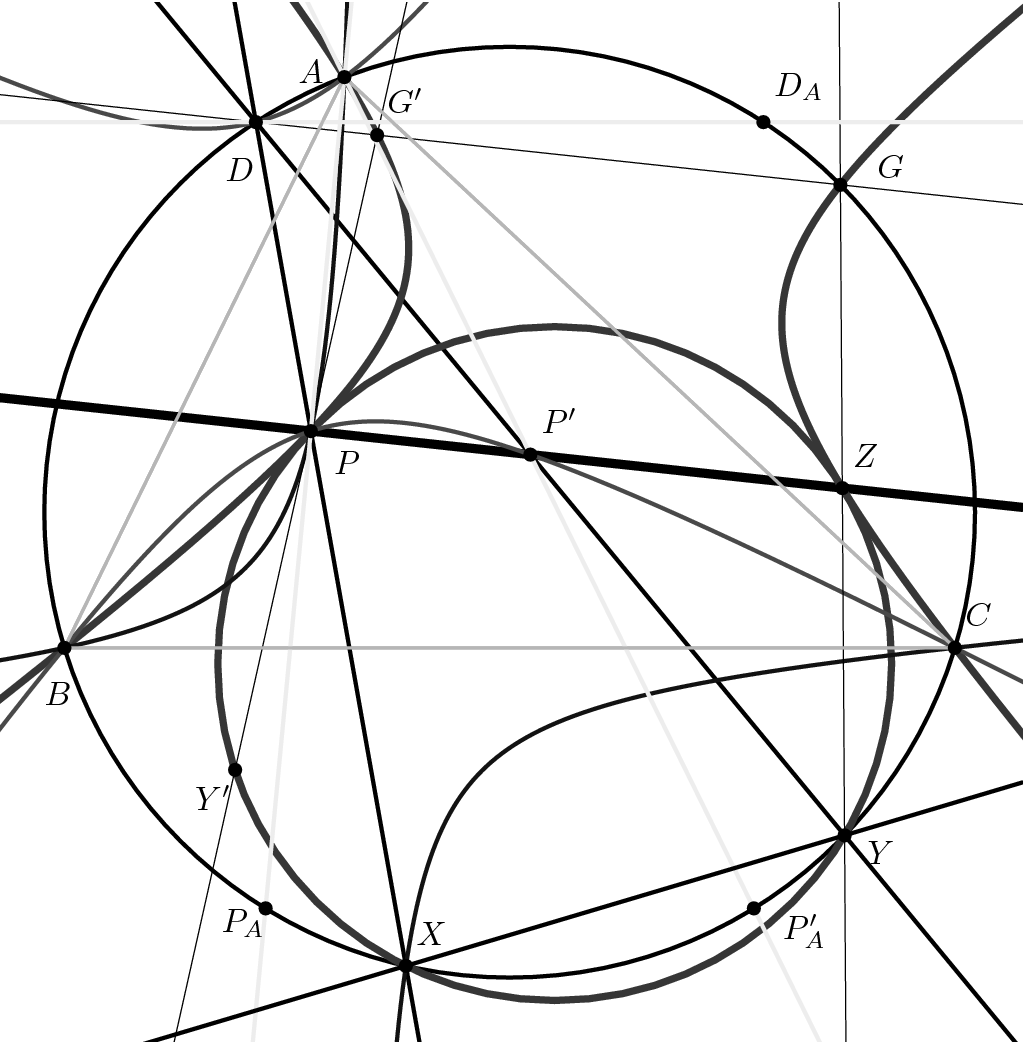}
    \caption{One Tangency Point Becomes Two}
\end{figure}

\begin{subproof}

    (a)
    
    By \Cref{circle-conic}, it suffices to verify this for one choice of $X_1, X_2$.
    
    Let $G' = DG \cap \mathcal E$. Since $\mathcal E$ is tangent to $(XPZ)$ at $P$ and $Z$, both $P$ and $Z$ must be equidistant from the center of $\mathcal E$. Since $GG' \parallel PZ$, $PZGG'$ is an isosceles trapezoid. Let the tangent to $\mathcal C_1$ at $P$ meet $\mathcal E$ at $G^* \ne P$; then
    \[\mathcal E(BC; AG^*) \stackrel{P}{=} \mathcal C_1(BC; AP) \stackrel{X}{=} \Omega(BC; AD) \stackrel{G}{=} \mathcal E(BC; AG'),\]
    implying that $G^* \equiv G'$, i.e. $PG'$ is tangent to $\mathcal C_1$. Let $PG'$ meet $\gamma$ at $Y'$; then $YY' \parallel PZ$, so
    \[\dang Y'PI = \dang PZY = \dang PXY = \dang PXI,\]
    implying that $PY'$ is tangent to $(PXI)$, so $(PXI)$ is tangent to $\mathcal C_1$ at $P$. With this tangency, by choosing $X_1 \equiv P, X_2 \equiv I$, we deduce that all $X_1X_2$ are parallel to $PZ$, as desired.
    
    (b)
    
    All such reciprocal pairs $(X_1, X_2)$ comprise an involution, so projecting from $X$ yields all reciprocal pairs $(X_1', X_2')$ comprising a single involution. Therefore, it suffices to prove that $P'$ lies on $X_1', X_2'$ for two choices of the pair $(X_1, X_2)$. Since $D, P', Y$ are collinear, we already have one pair $(X_1, X_2) \equiv (P, I)$.
    
    For our next pair, let $D_A$ lie on $\Omega$ such that $AD_A \parallel PP'$; let $\Omega$ meet $AP, AP'$ at $AP_A, AP_A'$. Then $BC \parallel DD_A \parallel P_AP_A'$. Let $\mathcal C_1$ meet $AD_A$ at $A_1$ and $XP_A'$ at $A_1'$. Then
    \[\mathcal C_1(BC; PA_1) \stackrel{A}{=} \Omega(BC; P_AD_A) = \Omega(BC; DP_A') \stackrel{X}{=} \mathcal C_1(BC; PA_1'),\]
    so $A_1 \equiv A_1'$, implying that $AD_A, XP_A'$ meet on $\mathcal C_1$. Since $AA_1 \parallel PP'$, by part (a),$(AA_1PX)$ is cyclic. Since $A, P', P_A'$ are collinear, our desired second pair is $(A, P_A')$. This completes this lemma.
    
\end{subproof}

Now we may finally approach part (b) of the main result of this section.

\begin{proof}
    By part (a) of \Cref{vary-conic}, $PZ \parallel Z_1Z_2$; by \Cref{parallel-circles}, this implies that $DF \parallel Z_1'Z_2'$, where $F = XZ \cap \Omega$. By part (b) of \Cref{vary-conic}, $P', Z_1', Z_2'$ are collinear. Therefore, it suffices to show that the $DF$ is parallel to the tangent to $\mathcal H$ at $P'$. Let $U = \Omega \cap PY$; denote $(ABCPU)$ by $\mathcal C_2$. Then
    \[\mathcal C_2(BC; AP) \stackrel{U}{=} (BC; AY) \stackrel{D}{=} \mathcal H(BC; AP') \stackrel{P}{=} \mathcal C_2(B, C; A, PP' \cap \mathcal C_2),\]
    so $PP'$ is tangent to $\mathcal C_2$. Therefore, $U$ is the isogonal conjugate of the point of infinity along the tangent to $(ABCPP')$ at $P'$. It suffices to prove that $AU$ is isogonal to the line through $A$ parallel to $DF$. Since $AD$ is isogonal to the line through $A$ parallel to $PP'$, if we let $W = DF \cap PZ$, it suffices to prove $\dang DWP = \dang DAU$.  This follows from
    \[ \dang DWP = \dang DPW + \dang WDP = \dang XPZ + \dang FDX = \dang XYZ + \dang FYX = \dang FYG = \dang DAU\]
    where the final step follows from $DG \parallel UF \parallel PZ$ by Reim's Theorem, as desired.
\end{proof}

With slight modification, we may phrase this fact as the following:

\begin{theorem}
    In triangle $ABC$ with circumcircle $\Omega$, let $\mathcal C$ be a circumconic of $ABC$ tangent to a circle $\gamma$ at two points $P,Q$. Suppose $PQ$ passes through the isogonal conjugate $P'$ of $P$. Let $P_0$ lie on $\Omega$ such that $P'P_0$ is tangent to $(ABCPP')$. Then one of the intersections $X$ of $\gamma$ and $\Omega$ has the property that $P_0X$ meets $\gamma$ on $(ABCPX)$ at a point distinct from $X$.
\end{theorem}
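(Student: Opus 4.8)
The plan is to read this statement as \Cref{two-tangency-points}(b) run in reverse: I will argue that the given bitangent pair $(\mathcal C,\gamma)$ is forced to be exactly the pair manufactured by the construction in \Cref{two-tangency-points}, after which the asserted point is literally one of the points $Z_1,Z_2$ there and $P_0$ is one of $Z_1',Z_2'$. Recall that in that construction one sets $D=\Omega\cap(ABCPP')$, $X=DP\cap\Omega$, $Y=DP'\cap\Omega$, $\gamma_0=(PXY)$, $Z=\gamma_0\cap PP'$, and $\mathcal C_0=(ABCPZ)$, and that part (a) certifies $\gamma_0$ to be bitangent to $\mathcal C_0$ at $P$ and $Z$ along the chord $PP'$. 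Thus the whole proof rests on the identification $\mathcal C=\mathcal C_0$, $\gamma=\gamma_0$, $Q=Z$.

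First I would fix $Q$. As $Q$ is a tangency point on the chord of contact and $PQ$ passes through $P'$, the chord of contact is precisely the line $PP'$, and $Q$ is the second intersection of $PP'$ with $\mathcal C$. Next I would prove uniqueness of the conic. The conics bitangent to $\mathcal C$ along $PP'$ form the pencil $\langle\mathcal C,(PP')^2\rangle$, and $\gamma$ is a member; demanding that a member be a circle forces its quadratic part to be that of a circle, i.e.\ proportional to $x^2+y^2$, which by a two-equations-in-one-unknown count is possible only when $PP'$ is parallel to a principal axis of $\mathcal C$. Regarding instead the pencil of circumconics through $A,B,C,P$, this axis condition is affine-linear in the pencil parameter and hence holds for a unique circumconic; since part (a) exhibits a bitangent circle for $\mathcal C_0$ along $PP'$, the conic $\mathcal C_0$ satisfies the same axis condition, so $\mathcal C=\mathcal C_0$. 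The bitangent circle is then unique too, being the one circle through $Q$ tangent to $\mathcal C$ at $P$; hence $\gamma=\gamma_0$ and $Q=Z$.

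Given the identification, the conclusion transcribes directly from \Cref{two-tangency-points}(b). The two intersections of $\gamma=\gamma_0$ with $\Omega$ are $X=DP\cap\Omega$ and $Y=DP'\cap\Omega$; I take $X$ as the advertised intersection. Let $Z_1,Z_2$ be the two intersections of $\gamma$ with $(ABCPX)$ besides $P$ and $X$, and $Z_1',Z_2'=\Omega\cap XZ_1,\,XZ_2$. Part (b) gives that $Z_1'Z_2'$ is tangent to $(ABCPP')$ at $P'$, so the tangent to $(ABCPP')$ at $P'$ meets $\Omega$ in exactly $\{Z_1',Z_2'\}$; as $P_0\in\Omega$ lies on this tangent, $P_0\in\{Z_1',Z_2'\}$, say $P_0=Z_1'$. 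Then $X$, $Z_1$ and $Z_1'=P_0$ are collinear with $Z_1\in\gamma\cap(ABCPX)$ and $Z_1\neq X$, which is exactly the assertion that $P_0X$ meets $\gamma$ on $(ABCPX)$ at a point distinct from $X$.

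The main obstacle is the identification $\mathcal C=\mathcal C_0$, equivalently $\gamma=\gamma_0$. The cleanest route I see is the bitangent-pencil/axis count above, which is mildly computational; a fully synthetic substitute would be to show directly that the two intersections of $\gamma$ with $\Omega$ lie on the lines $DP$ and $DP'$. That alone identifies $\gamma$ with $(PXY)$, and then $\mathcal C$ — being the unique member of $\langle\gamma,(PP')^2\rangle$ passing through $A$ — must coincide with $(ABCPZ)=\mathcal C_0$. Once the configurations are matched there is nothing left to compute, since the remainder is a verbatim reading of part (b).
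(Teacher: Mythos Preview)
Your overall plan coincides with the paper's: prove that the bitangent pair $(\mathcal C,\gamma)$ along $PP'$ is unique, identify it with the pair $(\mathcal C_0,\gamma_0)=((ABCPZ),(PXY))$ from \Cref{two-tangency-points}(a), and then read the conclusion off \Cref{two-tangency-points}(b) exactly as you do in your last paragraph. The paper's proof is in fact more terse than yours at the end---it stops once uniqueness is established---so your explicit unpacking of how $P_0\in\{Z_1',Z_2'\}$ and $P_0X\ni Z_1$ is a welcome addition.

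Where you diverge is in the uniqueness step. The paper stays synthetic and leans on its own main results: by \Cref{original-part-1} the unique circle through $P,Q$ tangent to $(ABCPQ)$ at $P$ always passes through the fixed point $X=DP\cap\Omega$; by \Cref{original-part-2}, if in addition the tangency at $Q$ holds (so the ``third'' intersection $L$ collapses onto $PQ$), then the point $G=MX\cap(ABCPX)$ is forced onto $PP'$, hence equals the fixed point $PP'\cap(ABCPX)$. From this fixed $G$ one reconstructs $M=XG\cap\Omega$ and then $Q=(PMX)\cap PP'$, so $Q$ is determined. Your route instead observes that a circle occurs in the bitangent pencil $\langle\mathcal C,(PP')^2\rangle$ iff $PP'$ is parallel to a principal axis of $\mathcal C$, and that this is a linear condition on the pencil of circumconics through $A,B,C,P$. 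This is correct and pleasantly self-contained, but it does step outside the paper's purely synthetic aesthetic, and it carries a small obligation you gloss over: you need the linear condition not to vanish identically on the pencil (otherwise ``affine-linear'' does not give uniqueness). That is easy to rule out generically, but you should say so. The paper's reconstruction argument has no such caveat, at the cost of invoking the heavier \Cref{original-part-2}.
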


\begin{proof}
    We will first revisit the labelling in \Cref{original-part-2}. For any point $P$ with isogonal conjugate $P'$ and point $Q$ on $PP'$, there is exactly one circle through $P, Q$ tangent to $(ABCPQ)$. By \Cref{original-part-1}, this circle passes through $X$ (using the labelling of \Cref{original-part-2}).
    
    \begin{figure}[!htbp]\centering
        \includegraphics[width=200pt]{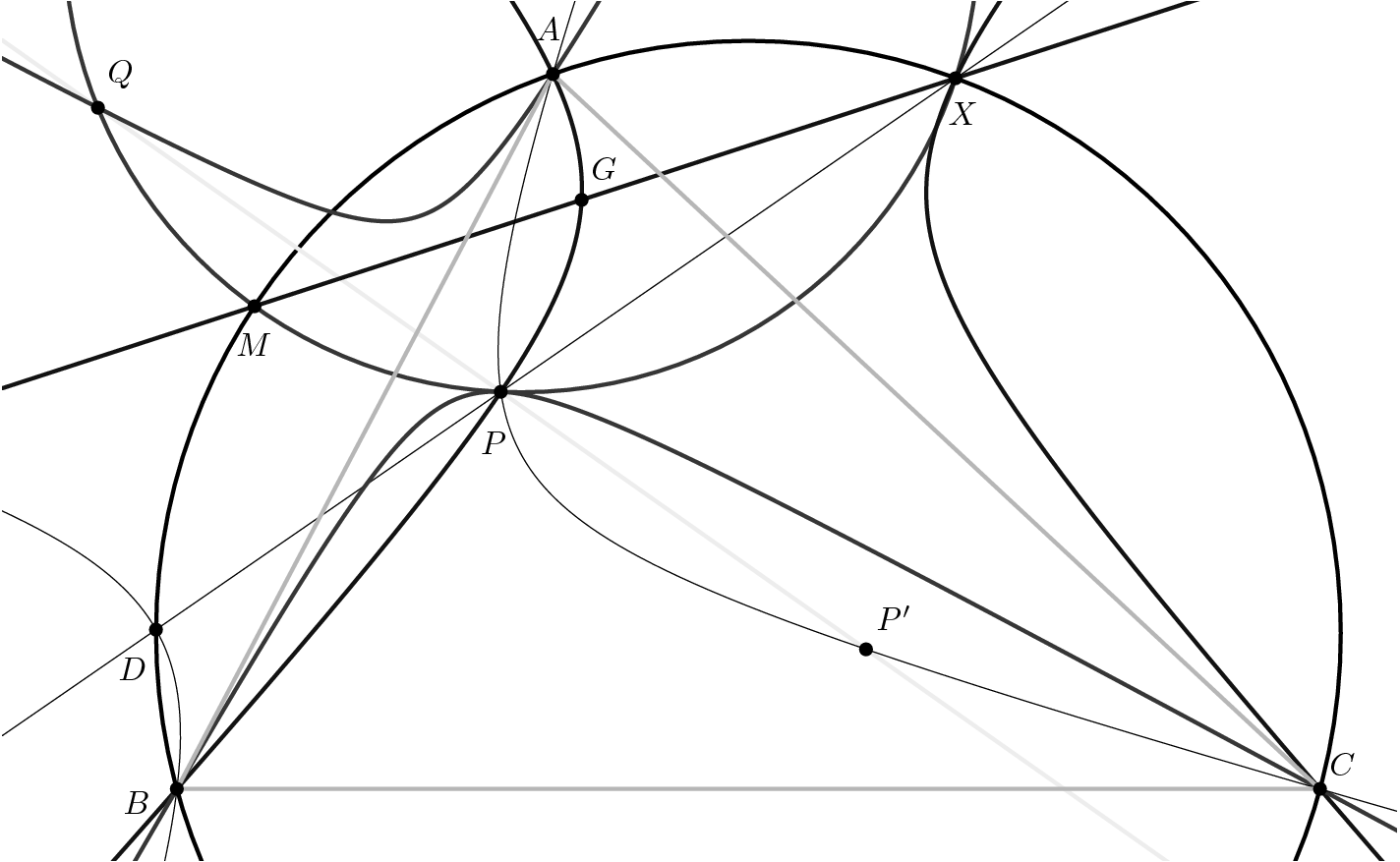}
        \caption{Proving Uniqueness}
    \end{figure}
    
    Now, by \Cref{original-part-2}, we may redefine $G$ to be the intersection of $MX$ and $(ABCPX)$ other than $X$. Now, if $(PQX)$ and $(ABCPQ)$ are not tangent at both $P$ and $Q$, then their other intersection $L$ lies on $PG$. In other words, if $(PQX)$ and $(ABCPQ)$ are tangent at $P$ and $Q$, then $G$ must lie on $PP'$, i.e. $G = PP' \cap (ABCPX)$. Such a point $G$ is unique.
    
    Note that $X$ is fixed. Therefore, given any point $G$ on $(ABCPX)$, we may reconstruct the corresponding $Q$ as follows: we construct $M = XG \cap \Omega$ and $Q = (PMX) \cap PP'$ (if $XG$ is tangent to $\Omega$ then we would set $M \equiv X$, and if $PP'$ is tangent to $(PMX)$ then we would set $Q \equiv P$. Now, when we consider the unique $G$ lying on $PP'$, there is exactly one corresponding $Q$. Therefore, there is exactly one $Q \in PP'$ for which $(ABCPQ)$ and $(PQX)$ are tangent at both $P$ and $Q$. This completes the proof.
\end{proof}

\section{Applications to Extreme Points of Conics}

We shall now show the third of our main results.

\begin{theorem}
    Let $ABC$ have isogonal conjugates $P, P'$ with $D = (ABC) \cap (ABCPP')$. Let $D' \in (ABC)$ such that $DD' \parallel PP'$. Then the following are equivalent:
    \begin{itemize}
        \item either $PP'$ is tangent to $(ABCD'P)$ or $PP'$ passes through the center of $(ABCD'P)$
        \item $P$ lies on either the major or minor axis of $(ABCD'P)$.
    \end{itemize}
\end{theorem}

\begin{proof}
    First, we revisit the configuration in \Cref{two-tangency-points}. We may redefine $\gamma = (PXY)$, $G$ to be the point on $\Omega$ for which $DG \parallel PP'$, and $\mathcal E = (ABCPG)$. Note that the two tangency points $P, Z$ of $\mathcal E$ and $\gamma$ can be treated as two intersections, each of multiplicity two. Now, if $P$ and $Z$ coincide at a single point $P$, then our new definitions of $\mathcal E$ and $\gamma$ would intersect at a single point $P$ with multiplicity 4.
    
    Now, for any given conic containing a given point $P$, there is a circle through $P$ tangent to the conic with multiplicity 4 if and only if $P$ is one of the extreme points of $ABC$. Furthermore, $P$ and $Z$ coincide if and only if $\gamma$ is tangent to $\mathcal E$ with multiplicity 4, which implies that $P$ is one of the extreme points of $\mathcal E$. On the other hand, $P$ and $Z$ coincide if and only if $\gamma$ is tangent to $PP'$ at $P$. By Reim (\cite{ref:Reim}), this occurs if and only if $YP \cap \Omega$ lies on the line through $D$ parallel to $PP'$, i.e. $Y, P, G$ are collinear. This occurs if and only if $DP'$ and $GP$ meet on $\Omega$, which using the labelling in the current configuration, is equivalent to $DP'$ meeting $D'P$ on $\Omega$.
    
    \begin{figure}[!htbp]\centering
        \includegraphics[width=200pt]{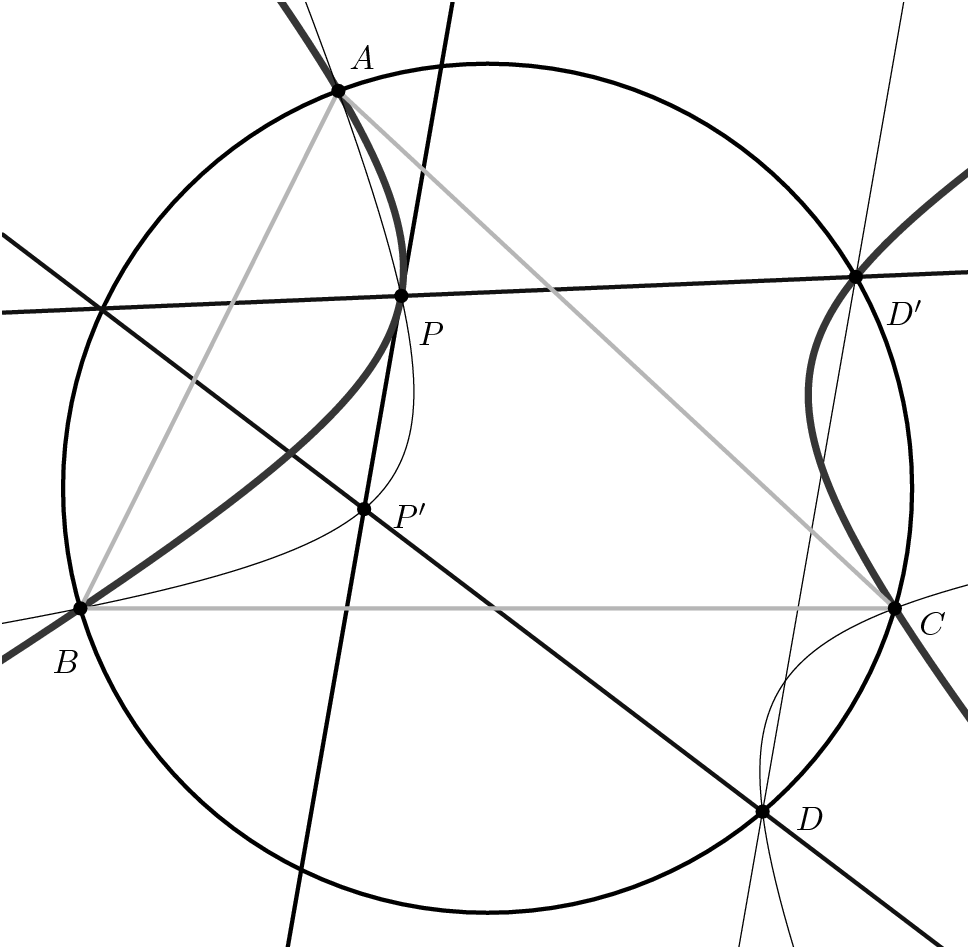}
        \caption{Case 1 - Multiplicity 4}
    \end{figure}

    Now, we claim that $PP'$ is tangent to $(ABCD'P)$ if and only if $DP'$ and $D'P$ intersect on $(ABC)$. Denote $(ABC), (ABCPP'), (ABCD'P)$ by $\Omega, \mathcal H, \mathcal C$ respectively. Then
    \[\mathcal C(BC; AP) \stackrel{D'}{=} \Omega(BC; A, D'P \cap \Omega), \quad \mathcal C(BC; A, PP' \cap \mathcal C) \stackrel{P}{=} \mathcal H(BC; AP') \stackrel{D}{=} \Omega(BC; A, DP' \cap \Omega)\]
    so $PP'$ is indeed tangent to $(ABCD'P)$ if and only if $D'P \cap DP' \in \Omega$. By the above, this occurs if and only if the corresponding $\gamma$ is tangent to $\mathcal C$ with multiplicity 4, which would imply that $P$ is one of the extremes of $\mathcal C$.
    
    Now, if $PP'$ passes through the center of $\mathcal C$, then $P$ cannot be the center of $\mathcal C$ (since $P \in \mathcal C$), so $P$ and the corresponding $Z$ are distinct. Since the two tangency points of $\gamma$ and $\mathcal C$ are collinear with the center of $\mathcal C$, both $P$ and $Z$ are extremes of $\mathcal C$. Conversely, if $P$ and $Z$ are both extremes of $\mathcal C$, then $PP'$ must contain the center of $\mathcal C$, so $PP'$ contains the center of $\mathcal C$ if and only if $P$ and the corresponding $Z$ are two distinct extremes of $\mathcal C$.
    
    \begin{figure}[!htbp]\centering
        \includegraphics[width=200pt]{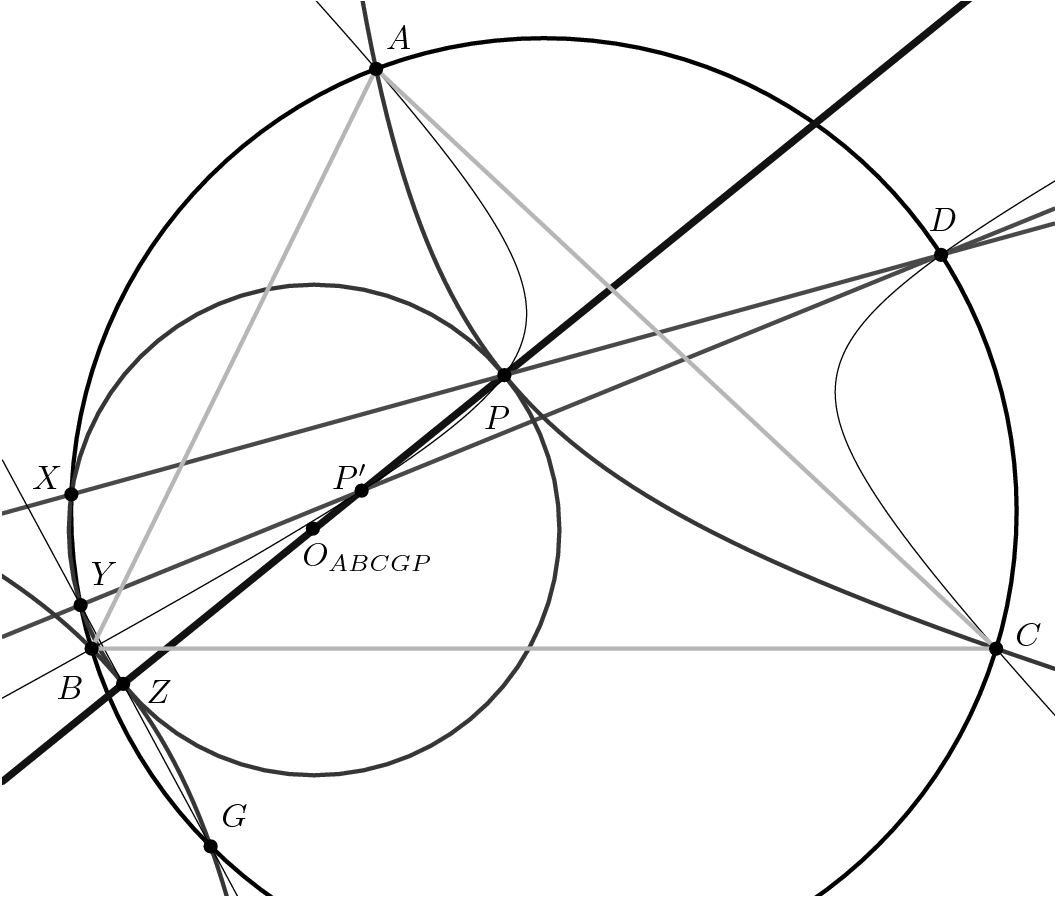}
        \caption{Case 2 - Both Tangency Points are Extreme Points}
    \end{figure}
    
    Finally, we note that if $P$ is one of the extremes of $\mathcal C$, then either the corresponding $\gamma$ to $P$ is tangent to $\mathcal C$ at two extreme points of $\mathcal C$, or the corresponding $\gamma$ is tangent to $\mathcal C$ with multiplicity 4. This completes the if and only if condition, completing the proof.
\end{proof}

\section{Acknowledgements}

The author would like to acknowledge and give special thanks to Michael Diao for helping format the document and diagrams, as well as proofreading and giving significant advice for all parts of the paper.

\end{document}